\newcommand{\al}{\alpha}
\newcommand{\be}{\beta}
\newcommand{\del}{\delta}
\newcommand{\lam}{\lambda}
\newcommand{\Lam}{\Lambda}
\newcommand{\vp}{\varphi}
\newcommand{\calH}{\mathcal{H}}
\newcommand{\HbE}{\calH(E, \be)}
\newcommand{\HbS}{\calH(E, \be_S)}
\newcommand{\C}{\mathbb C}
\newcommand{\Cp}{C_\vp}
\newcommand{\sumn}{\sum_{n=1}^\infty}
\newcommand{\sumf}{\sum_{n=1}^\infty a_ne^{-\lambda_nz}}
\newcommand{\sumfone}{\sum_{n=1}^\infty a_ne^{-\lambda_nz}}
\newcommand{\limn}{\lim\limits_{n\to\infty}}
\newcommand{\limsupn}{\limsup\limits_{n\to\infty}}
\newcommand{\R}{\mathbb R}
\newcommand{\calS}{\mathcal S}
\newtheorem{thm}{Theorem}
\newtheorem{prop}[thm]{Proposition}
\newtheorem{lem}[thm]{Lemma}
\newtheorem{con}{Convention}
\newtheorem{rem}[thm]{Remark}
\newtheorem{exa}[thm]{Example}
\numberwithin{thm}{section}
\numberwithin{equation}{section}
\renewcommand{\Re}{\mathfrak{R}e}
\newcommand{\bgeqn}{\begin{equation}}
\newcommand{\edeqn}{\end{equation}}
\title[Composition Operators on Entire Dirichlet Series]{Complete Characterization of Bounded Composition Operators on the General Weighted Hilbert Spaces of Entire Dirichlet Series}
\author{Minh Luan Doan \& Le Hai Khoi}%
\address{(Doan \& Khoi) Division of Mathematical Sciences, School of Physical and Mathematical Sciences, Nanyang Technological University (NTU),
637371 Singapore}%
\address{(Doan) Current address: Department of Mathematics, University of Notre Dame, IN 46556, USA }
\email{DOAN0014@e.ntu.edu.sg;  lhkhoi@ntu.edu.sg}
\date{\today}
\keywords{Composition operators, entire Dirichlet series, Hilbert spaces, boundedness}
\thanks{Supported in part by MOE's AcRF Tier 1 grant M4011724.110 (RG128/16).}
\subjclass[2010]{30D15, 47B33.}
\begin{document}
	
\maketitle

\begin{abstract}
	We establish necessary and sufficient conditions for boundedness of composition operators  on the most general class of Hilbert spaces of entire Dirichlet series with real frequencies. Depending on whether or not the space contains any nonzero constant function, different criteria for boundedness are developed. Thus, we complete the characterization of bounded composition operators on all known Hilbert spaces of entire Dirichlet series of one variable.
\end{abstract}

\section{Introduction}
	Suppose $\Lam=(\lam_n)_{n=1}^\infty$ is a sequence of real numbers that satisfies  $\lam_n\uparrow+\infty$ (i.e., $\Lam$ is unbounded and strictly increasing).  Consider a \emph{Dirichlet series with real frequencies}
	\bgeqn \label{eq:Dirichlet} \sum_{n=1}^\infty a_ne^{-\lam_nz}=a_1e^{-\lam_1z}+a_2e^{-\lam_2z}+a_3e^{-\lam_3z}+\dots, \edeqn
	where $z\in\C$ and $(a_n)\subset\C$. The series \eqref{eq:Dirichlet} is also called a \textit{general Dirichlet series}. When $\lam_n=\log n$, it becomes a \textit{classical} (or \textit{ordinary}) Dirichlet series, which has various important applications in number theory and complex analysis. If $\lam_n = n$, with the change of variable $\zeta=e^{-z}$, then \eqref{eq:Dirichlet} becomes the usual power series in $\zeta$.
	
	The classical Dirichlet series and their important role in analytic number theory are studied in the book \cite{A76}, and the theory of general Dirichlet series is presented in the excellent monograph by Hardy and Riesz \cite{HR15}. One important result from the monograph states that the region of convergence of a general Dirichlet series (if exists) is a half-plane (and for entire series, the region is the whole complex plane). Furthermore, the representation \eqref{eq:Dirichlet} is unique and holomorphic on that region of convergence.
	
	For entire Dirichlet series, Ritt \cite{R28} investigated their growth and convergence, based on which Reddy \cite{Red66} defined and formulated logarithmic orders. In the second half of the last century, Leont'ev developed theory of representation for entire functions by Dirichlet series with complex frequencies \cite{L83}. Such series are of the form \eqref{eq:Dirichlet} but with complex $\lam_n$'s.  As uniqueness no longer holds for this representation, we will not consider complex frequencies in the present article.
	
	It is clear that only finitely many elements of $\Lambda$ are negative, but there is no agreement on further restriction on the sequence. Hardy and Riesz allowed some terms $\lam_n$ to be negative. Mandelbrojt \cite{M69} supposed that all terms of $\Lambda$ are strictly positive, so nonzero constants are not representable in the form \eqref{eq:Dirichlet}. Ritt \cite{R28} allowed the possibility for free constants by adding a term $a_0$ to the series. Whether or not constants are representable by \eqref{eq:Dirichlet} affects our results in this paper, so in order to be consistent with the notations of both Mandelbrojt and Ritt, we follow the convention that $\lam_1\ge 0$, i.e., all terms of $\Lam$ are \emph{nonnegative}.

	\smallskip

	In functional analysis and operator theory, construction of Hilbert spaces of Dirichlet series and action of composition operators on them have been attractive topics for mathematicians.
	
	In the general context, let $\mathscr{H}$ be some Hilbert space whose members are holomorphic functions on a domain $G$ of the complex plane that are representable by Dirichlet series, and $\vp$ be a holomorphic self-map on $G$. The \emph{composition operator $\Cp$ acting on $\mathscr{H}$ induced by $\vp$} is defined by the rule $\Cp f=f\circ\vp,$ for $f\in\mathscr{H}$. Researchers are interested in the relation between the function-theoretic properties of $\vp$ and the operator-theoretic properties of $\Cp$. Typical problems in this topic include the invariance of $\Cp$ (i.e., $\Cp(\mathscr{H})\subseteq\mathscr{H}$), the boundedness and compactness of $\Cp$, computation of its norm and essential norms, etc.
	
	Many studies have been done on composition operators on Hilbert spaces of classical Dirichlet series. In \cite{GH99}, Gordon and Hedenmalm considered the boundedness of such operators on space of classical series with square summable coefficients. The compactness and numerical range were studied in \cite{FQ04} and \cite{FQV04}. Recently, complex symmetric composition operators have been investigated \cite{Y17}.
	
	Although entire Dirichlet series have been studied in many details, not until recently has the theory of composition operators on Banach spaces of entire Dirichlet series been developed. In \cite{HHK}, the authors proposed the construction of the general Hilbert spaces $\HbE$ of entire Dirichlet series by the use of weighted sequence spaces. Amongst the many subclasses of  $\HbE$, several properties of composition operators on them  were explored, including the boundedness, compactness and compact difference, on the most specific case, namely the spaces $\HbS$. Later, some results on essential norms of such operators \cite{HuK12}, their Fredholmness, Hilbert--Schmidtness, cyclicity and norm computation via reproducing kernels \cite{WY} on $\HbS$ were obtained.

	Specifically, let $\be$ be a sequence of positive real numbers that satisfies the following condition,
	\[\exists\alpha>0\colon  \liminf_{n\to\infty}\frac{\log\be_n}{\lam_n^{1+\al}}=+\infty.\leqno{(S)}\]
	Then the Hilbert space $\HbS$ with weight $\beta$ is defined as follow
	\[\HbS=\left\{f(z)=\sumn a_ne^{-\lam_nz}\colon \|f\|:=\Big(\sumn |a_n|^2\be_n^2\Big)^{1/2}<+\infty\right\},\]
	where the natural inner product is induced by the given norm.
	
	It is proved in \cite{HHK} that any series $f(z)=\sumn a_ne^{-\lam_nz}$ in $\HbS$ indeed represents an entire function, and such $f$ is of finite ordinary growth order.  Meanwhile, we pay attention the following theorem.
	\begin{thm}\label{T:S}
		Consider a sequence of positive real numbers $\be=(\be_n)$ that satisfies condition $(S)$ and the corresponding Hilbert space $\HbS$. A composition operator $\Cp$ induced by an entire function $\vp$, is bounded on  $\HbS$ if and only if $\vp(z) = z + b$, for some $b\in\C$ with $\Re(b) \ge 0.$
	\end{thm}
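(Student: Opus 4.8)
The plan is to treat the two implications separately, with sufficiency being routine and necessity carrying all the weight.

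\emph{Sufficiency.} If $\vp(z)=z+b$ with $\Re(b)\ge0$, then $\Cp$ acts diagonally on the frequency exponentials: $\Cp(e^{-\lam_n z})=e^{-\lam_n b}e^{-\lam_n z}$, so for $f=\sumn a_ne^{-\lam_nz}$ one gets $\Cp f=\sumn a_ne^{-\lam_n b}e^{-\lam_nz}$ and hence $\|\Cp f\|^2=\sumn|a_n|^2e^{-2\lam_n\Re(b)}\be_n^2\le\|f\|^2$, because $\lam_n\ge0$ and $\Re(b)\ge0$ force $e^{-2\lam_n\Re(b)}\le1$. Thus $\Cp$ is a contraction, in particular bounded.

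\emph{Necessity: reduction to affine maps.} Assume $\Cp$ is bounded. Since $e^{-\lam_n z}\in\HbS$ with $\|e^{-\lam_n z}\|=\be_n$, boundedness gives $e^{-\lam_n\vp}=\Cp(e^{-\lam_n z})\in\HbS$ together with the quantitative bound $\|e^{-\lam_n\vp}\|\le\|\Cp\|\,\be_n$ for every $n$. In particular each $e^{-\lam_n\vp}$ is an entire Dirichlet series with frequencies drawn from $\Lam$. The first goal is to show $\vp(z)=cz+b$. I would exploit that an everywhere-convergent Dirichlet series $\sumn c_me^{-\lam_m z}$ decays \emph{exactly} exponentially along the positive real axis: as $x\to+\infty$ the nonzero term of smallest frequency dominates, so $\log|e^{-\lam_n\vp(x)}|=-\lam_n\Re\vp(x)$ is asymptotically linear in $x$. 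For a polynomial $\vp$ this already forces $\deg\vp\le1$, and the remaining cases (transcendental $\vp$, or higher-degree $\vp$ with purely imaginary leading coefficients) are excluded by combining this one-sided control with the growth restriction coming from $(S)$: condition $(S)$ makes $\be_n$ grow faster than $e^{M\lam_n^{1+\al}}$ for every $M$, forcing every member of $\HbS$ to have finite ordinary order and, more sharply, forcing the Dirichlet coefficients of $e^{-\lam_n\vp}$ to decay fast enough that a non-affine $\vp$ would make $\|e^{-\lam_n\vp}\|$ infinite. Consequently $\vp(z)=cz+b$ with $c$ real and $c\ge0$ (the degenerate constant case $c=0$ is set aside).

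\emph{Necessity: pinning down $c$ and $b$.} With $\vp(z)=cz+b$ we have $e^{-\lam_n\vp}=e^{-\lam_n b}e^{-c\lam_n z}$, and uniqueness of the Dirichlet representation forces $c\lam_n\in\Lam\cup\{0\}$ for all $n$. If $0<c<1$, applying this to the smallest \emph{positive} frequency $\lam_{n_0}$ gives $0<c\lam_{n_0}<\lam_{n_0}$, a value lying strictly below the whole spectrum and hence not in $\Lam\cup\{0\}$, so $e^{-\lam_{n_0}\vp}\notin\HbS$, a contradiction. If $c>1$, consider the geometric orbit $\nu_k:=c^k\lam_{n_0}\in\Lam$, $k\ge0$; here $\Cp(e^{-\nu_k z})=e^{-\nu_k b}e^{-\nu_{k+1}z}$, so the boundedness bound yields $\log\be(\nu_{k+1})\le\log\be(\nu_k)+\nu_k\Re(b)+\log\|\Cp\|$. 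Iterating and summing the geometric series $\sum_{j<k}\nu_j\le\nu_k/(c-1)$ gives $\log\be(\nu_k)=O(\nu_k)$ along this subsequence, i.e.\ at most \emph{linear} growth of $\log\be$ in the frequency; this flatly contradicts $(S)$, which demands $\log\be_n\gg\lam_n^{1+\al}$. Hence $c=1$ and $\vp(z)=z+b$. Finally $\Cp$ is diagonal, $\Cp(e^{-\lam_n z})=e^{-\lam_n b}e^{-\lam_n z}$, so boundedness forces $e^{-\lam_n\Re(b)}\le\|\Cp\|$ for all $n$; since $\lam_n\to+\infty$ this is possible only if $\Re(b)\ge0$.

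\emph{Main obstacle.} The delicate step is the reduction to affine maps: membership $e^{-\lam_n\vp}\in\HbS$ must be converted into genuine rigidity on $\vp$, and the exact-exponential-decay argument on the real axis only controls $\Re\vp$ in one direction, so it must be carefully supplemented by the $(S)$-driven coefficient and order estimates to eliminate transcendental $\vp$ and polynomials whose top coefficients are imaginary. Once affineness is secured, the arithmetic of the frequencies together with the orbit estimate makes the determination of $c=1$ and $\Re(b)\ge0$ comparatively mechanical.
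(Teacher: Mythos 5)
Your sufficiency argument and the final ``pinning down $c$ and $b$'' arithmetic are fine (the uniqueness-of-frequencies argument for $0<c<1$ and the geometric-orbit estimate against condition $(S)$ for $c>1$ are correct and even elegant), but the necessity hinges entirely on the sentence ``Consequently $\vp(z)=cz+b$,'' and that step is never actually proved --- it is precisely the hard part of the theorem. The one-sided asymptotics you derive, $\Re\vp(x)=cx+O(1)$ as $x\to+\infty$ (correct, by dominance of the lowest frequency with nonzero coefficient), constrains $\vp$ on a single ray only. Entire functions such as $\vp(z)=z+iz^2$ or $\vp(z)=z+ie^{z}$ satisfy $\Re\vp(x)=x$ identically on the real axis and are excluded by nothing you write; note moreover that $e^{-\lam_n(z+iz^2)}$ has ordinary order $2$, which is finite, so even the finite-order consequence of $(S)$ (Lemmas \ref{L:fin-ord1} and \ref{L:fin-ord2}) does not rule it out. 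What actually kills such a $\vp$ is representability: an everywhere-convergent Dirichlet series with nonnegative real frequencies and $L<\infty$ converges absolutely, hence is \emph{bounded on every right half-plane}, whereas for $\vp(z)=z+iz^2$ one has $\sup_{t\in\R}|e^{-\lam_n\vp(\sigma+it)}|=\sup_{t\in\R}e^{-\lam_n\sigma+2\lam_n\sigma t}=+\infty$ for every $\sigma>0$. Your proposal never formulates, let alone proves, an estimate of this kind; in \cite{HHK} this reduction is exactly where P\'olya's theorem (Lemma \ref{L:Pol}) and several order computations enter. A secondary but real issue: the constant case cannot be ``set aside.'' When $\lam_1>0$ it must be excluded (easily: a constant $\vp$ sends $e^{-\lam_1z}$ to a nonzero constant, which is not in $\HbS$), and when $\lam_1=0$ the theorem as stated is actually \emph{false} for constant $\vp$ --- which is one of the main observations of the present paper.

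For comparison, the paper does not prove Theorem \ref{T:S} by coefficient or growth analysis at all: it recovers it as a special case of Theorem \ref{T:bounded1} for the larger class $\HbE$, whose necessity (Proposition \ref{P:lam>0}) runs through the reproducing-kernel structure. Boundedness of $\Cp$ gives boundedness of $\Cp^*$; the identity $\Cp^*k_w=k_{\vp(w)}$ converts this into $\|k_{\vp(w)}\|^2\le B\|k_w\|^2$ for all $w\in\C$; and if $\psi(z)=z-\vp(z)$ were nonconstant, Liouville's theorem applied to $F=e^{\lam_1\psi}$ produces $w_0$ with $e^{2\lam_1\Re(\psi(w_0))}\ge2B$, whence termwise comparison of the kernel series (using $\lam_n\ge\lam_1>0$) gives $\|k_{\vp(w_0)}\|^2>B\|k_{w_0}\|^2$, a contradiction. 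This yields $\vp(z)=z+b$ in one stroke --- no intermediate dilation coefficient $c$ ever appears, so your orbit argument has no counterpart --- and it uses neither order theory nor condition $(S)$, which is exactly why it extends to all spaces $\HbE$. If you want to salvage your route, you must supply the affine reduction, either by P\'olya's lemma as in \cite{HHK} (where $(S)$ is genuinely needed) or by an argument like the half-plane boundedness above; as written, the proof does not go through.
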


	\medskip
	
	We have two important remarks about this theorem.
	
	\emph{Firstly}, the proof given in the original paper \cite{HHK} is only applicable if $\HbS$ contains no nonzero constants (in particular, $\lam_1>0$ must hold in Proposition 4.4), while no proof was provided in the other case $\lam_1=0$. Note that the criterion for boundedness of $\Cp$ will be different if $\HbS$ contains nonzero constants. For instance, any constant $\vp$ will now induce a bounded operator $\Cp$, so Theorem \ref{T:S} has not covered all possibilities.
	
	\emph{Secondly}, we note that the proof of the theorem strongly relies on the following lemma.
	
	\begin{lem}[{P\'olya \cite{Pol}}]\label{L:Pol}
		Let $g$ and $h$ be entire functions such that $f=g\circ h$ is of finite (ordinary) order. Then either
		\begin{enumerate}[(i)]
			\item $h$ is a polynomial and $g$ is of finite order, or
			\item $h$ is not a polynomial, but an entire function of finite order, and $g$ is of order $0$.
		\end{enumerate}
	\end{lem}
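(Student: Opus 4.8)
The plan is to argue entirely through the growth order $\rho(F)=\limsup_{r\to\infty}\frac{\log\log M(r,F)}{\log r}$, where $M(r,F)=\max_{|z|=r}|F(z)|$. First I would dispose of the degenerate cases: if $g$ is constant then $f$ is constant and there is nothing to prove, so I assume $g$ (hence $h$) nonconstant; and since replacing $g(w)$ by $g(w+h(0))$ alters neither $g\circ h$ nor the order of $g$, I may normalize $h(0)=0$. The maximum modulus principle gives at once the upper estimate $M(r,g\circ h)\le M(M(r,h),g)$, because $|h(z)|\le M(r,h)$ for $|z|\le r$. The real work is a matching lower estimate; the two facts to be extracted in the transcendental case are that $h$ has finite order and that $g$ has order $0$.

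For alternative (i), suppose $h$ is a polynomial of degree $d\ge 1$. Then, by the argument principle, for all large $r$ the image $h(\{|z|\le r\})$ contains the whole disk $\{|w|\le c\,r^{d}\}$, where $c=\tfrac12|a_d|$ and $a_d$ is the leading coefficient (any $w_0$ with $|w_0|<\min_{|z|=r}|h(z)|$ is attained, since the winding number of $h(\partial D_r)$ about $w_0$ equals the number $d$ of zeros of $h$ in $D_r$). Hence $M(r,g\circ h)\ge M(c\,r^{d},g)$, and together with the upper bound $M(r,g\circ h)\le M(M(r,h),g)=M(O(r^{d}),g)$ a routine substitution $s=r^{d}$ in the $\limsup$ gives $\rho(g\circ h)=d\cdot\rho(g)$. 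As $d\ge 1$ and $f=g\circ h$ has finite order, $g$ must have finite order, which is alternative (i).

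For alternative (ii), suppose $h$ is transcendental, so $\log M(r,h)/\log r\to\infty$, i.e. $h$ outgrows every power of $r$. The crux is the lower estimate
\[M(r,g\circ h)\ge M\!\left(\tfrac12 M(\tfrac r2,h),\,g\right)\]
valid for all large $r$. Granting it, finiteness of the order of $h$ is immediate: since $g$ is nonconstant, $M(s,g)\ge\gamma s$ for some $\gamma>0$ and all large $s$, whence $\log M(r,f)\ge\log M(\tfrac r2,h)+O(1)$ and therefore $\rho(h)\le\rho(f)<\infty$. To force $g$ to have order $0$, I argue by contradiction: if $\rho(g)=\sigma>0$ then along a sequence $s_k\to\infty$ one has $\log M(s_k,g)\ge s_k^{\sigma/2}$. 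Choosing $r_k$ with $\tfrac12 M(\tfrac{r_k}2,h)=s_k$ and using that $h$ outgrows every power (so $s_k\ge r_k^{N}$ eventually, for each fixed $N$), the lower estimate yields $\log M(r_k,f)\ge s_k^{\sigma/2}$, hence $\frac{\log\log M(r_k,f)}{\log r_k}\ge\frac{(\sigma/2)\log s_k}{\log r_k}\ge\tfrac12 N\sigma\,(1+o(1))$. Since $N$ is arbitrary this makes $\rho(f)=\infty$, a contradiction; thus $\rho(g)=0$, which is alternative (ii).

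The main obstacle is the covering lemma underlying the lower estimate, namely that for a nonconstant entire $h$ with $h(0)=0$ the image $h(\{|z|\le r\})$ contains the entire circle $\{|w|=\tfrac12 M(\tfrac r2,h)\}$; granting this, $M(r,g\circ h)\ge\max_{|w|=\frac12 M(r/2,h)}|g(w)|=M(\tfrac12 M(\tfrac r2,h),g)$ by maximum modulus. I would prove the covering statement by a winding-number argument: because $h(0)=0$, the closed curve $h(\{|z|=\rho\})$ has winding number at least $1$ about the origin for every $\rho>0$, hence meets every ray from $0$; letting $\rho$ range over $[\tfrac r2,r]$ and tracking, for each fixed argument $\theta$, the modulus of the points of argument $\theta$ on these curves, an intermediate-value argument produces a preimage of each prescribed $w$ with $|w|=\tfrac12 M(\tfrac r2,h)$. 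The delicate point is that a transcendental $h$ may omit one finite value (Picard), so the target radius must be kept safely away from any omitted value and from the zeros of $h$, and this is where I expect the argument to need the most care. Alternatively, this covering estimate is classical and may simply be cited from P\'olya's original paper \cite{Pol}.
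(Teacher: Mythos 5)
The paper itself offers no proof of this lemma --- it is quoted as a known result from P\'olya's 1926 paper \cite{Pol}, and indeed the stated purpose of the paper is to \emph{avoid} relying on it --- so your attempt can only be judged on its own merits. Its skeleton is the classical route and is largely sound: the upper bound $M(r,g\circ h)\le M(M(r,h),g)$ is correct; the polynomial case, via the argument-principle fact that $h(\{|z|\le r\})\supseteq\{|w|\le \tfrac12|a_d|r^d\}$ for large $r$, correctly yields $\rho(g\circ h)=d\,\rho(g)$ and hence alternative (i); and, \emph{granting} a lower estimate of the form $M(r,g\circ h)\ge M\bigl(c\,M(\tfrac r2,h),g\bigr)$ for some constant $c>0$ and all large $r$, your deductions that $\rho(h)\le\rho(f)<\infty$ and that $\rho(g)>0$ would force $\rho(f)=\infty$ are both correct.

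The genuine gap is the covering lemma itself, which is the entire analytic content of P\'olya's theorem. You assert that for entire $h$ with $h(0)=0$ the image $h(\{|z|\le r\})$ contains the \emph{full} circle of radius exactly $\tfrac12 M(\tfrac r2,h)$, and you propose to prove this by a winding-number plus intermediate-value argument. This cannot work: your argument uses only that $h$ is holomorphic on $\overline{D}_r$ with a zero at the origin, and for that class of functions the claim is false. The Koebe function $K(z)=z/(1-z)^2$ satisfies $K(0)=0$ and maps the unit disk onto $\C\setminus(-\infty,-\tfrac14]$, a domain containing \emph{no} circle centered at $0$ of radius $\ge\tfrac14$, although $\tfrac12 M(\tfrac12,K)=1$; so any correct proof of your claim would have to use entirety of $h$ in an essential way, which yours does not. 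The concrete failures are: a curve $h(\{|z|=\rho\})$ with nonzero winding number about $0$ does meet every ray, but nothing controls the \emph{modulus} of the meeting point; and the intersection of the open connected set $h(D_r)$ with a fixed ray need not be an interval (think of spiral-shaped image domains), so no intermediate-value argument in $\rho$ or in the modulus is available. What is true, and what P\'olya actually relies on, is Bohr's covering theorem: there is an \emph{absolute} constant $c>0$ (necessarily $c\le\tfrac18$, by the Koebe example) such that $h(D_r)$ contains \emph{some} circle centered at the origin of radius at least $c\,M(\tfrac r2,h)$. That weaker statement suffices for your case-(ii) computations verbatim with $c$ in place of $\tfrac12$, but its proof is genuinely nontrivial (Bohr deduced it from Carath\'eodory--Landau--Schottky type estimates), and replacing it by a citation --- as you yourself suggest in your final sentence --- is the honest way to close the argument; as written, the proof is incomplete at its crucial step. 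A final small point: if $g$ is constant and $h$ has infinite order, neither alternative of the lemma holds, so ``nothing to prove'' should really read ``the lemma implicitly assumes $g$ is nonconstant.''
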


	In order to use Lemma \ref{L:Pol}, orders of entire functions in the space must be finite, so condition $(S)$ is imposed on the weight sequences $\be$  of the induced spaces $\HbS$. In addition, we highlight that all the aforementioned results of $\Cp$ in \cite{HHK,HuK12,WY} are established only for spaces $\HbS$, due scope of the known proof of Theorem \ref{T:S}. Because the first and most important property is the boundedness, and other problems such as compactness, compact difference, etc. can only be resolved thereafter, we must find a \textbf{new approach} to establish the boundedness of $\Cp$ that does not involve Lemma~\ref{L:Pol} when dealing with spaces that are more general than $\HbS$. As far as we know, there has been no successful answer to this problem.
	
	\medskip
	
	Therefore, a natural question can be asked is: \emph{what are the criteria for boundedness of composition operators on some Hilbert spaces of entire functions that belong to a class that contains spaces $\HbS$ as special cases?}
	
	The aim of this research article is to tackle the proposed question. We will work with the spaces $\HbE$, the \emph{most general} class of Hilbert spaces of entire Dirichlet series that we know up to now.  Thus, we provide a \emph{complete characterization of the boundedness of composition operators $\Cp$}.
	
		As we will see later, Lemma \ref{L:Pol} fails to be applied to the general spaces $\HbE$.	Hence, we propose \textbf{different techniques} of proof from that of \cite{HHK}, which covers both cases in the first remark above. We note that the criteria in those cases are not identical, and their proofs are not trivial applications of each other.
	
	The structure of the paper is as follows. We provide in Section \ref{sec:prelim} a summary of known results about Hilbert spaces of entire Dirichlet series, most importantly the construction of spaces $\HbE$. Section \ref{sec:RKHS} presents important notions of reproducing kernels on spaces $\HbE$, which is helpful for subsequent sections. In Section \ref{sec:HbE}, we deal with boundedness of composition operators. In particular, we first propose a sufficient condition in Proposition \ref{P:suf}, and later prove that this condition is also necessary. In Subsections \ref{sec:suf-cond} and \ref{sec:nec-cond}, boundedness of $\Cp$ for the most general class $\HbE$ is studied, in both cases when a space $\HbE$ does not contain nonzero constants (Theorem \ref{T:bounded1}) and when it does (Theorem \ref{T:bounded2}). A summary of our results and some concluding remarks are given in Section \ref{sec:conclusion}.
	
	\section{Hilbert spaces $\HbE$ of entire Dirichlet series}\label{sec:prelim}
	For a given sequence $\Lam=(\lam_n)_{n=1}^\infty$ with $0\le \lam_n\uparrow +\infty$, define the following constant $L$,
	\[L:=\limsupn \frac{\log n}{\lam_n}.\]
	
	We associate to each Dirichlet series \eqref{eq:Dirichlet} the following quantity,
	\[D:=\limsupn \frac{\log|a_n|}{\lam_n}.\]
	
	It is well-known that $L$ is the upper bound of the distance between the abscissa of convergence and the abscissa of absolute convergence of the series \eqref{eq:Dirichlet}. We refer the reader to \cite{HR15} for the basic properties of these abscissas. If ${L<+\infty}$, then the Dirichlet series \eqref{eq:Dirichlet} represents (uniquely) an entire function if and only if $D=-\infty$ (see, e.g., \cite{HK12,M69}).
	
	\begin{con}\label{Conv:1}
		Throughout this paper, the condition $L<+\infty$ is always supposed to hold.
	\end{con}
	
	Now, let $\be=(\be_n)$ be a sequence of (not necessarily distinct or monotonic) positive numbers. We introduce the following \emph{weighted sequence space with weight $\be$}:
	\[\ell_\be^2=\left\{ {\bf a}=(a_n)_{n=1}^\infty\subset\C\colon\|{\bf a}\|_{\ell^2_\beta}=\Big(\sumn |a_n|^2\be_n^2\Big)^{1/2}<+\infty \right\},\]
	which is a Hilbert space with the inner product of any ${\mathbf a}=(a_n)$ and ${\mathbf b}=(b_n)$ in $\ell^2_\be$ given by
	$$\langle {\bf a},{\bf b} \rangle_{\ell^2_\be}=\sumn a_n\overline{b_n}\be_n^2.$$
	
	The sequence spaces $\ell^2_\beta$ have an important role in the construction of many important Hilbert spaces by varying $\be$, such as Hardy spaces, Bergman spaces, Dirichlet spaces, Fock spaces, etc. (see, e.g., the book \cite{CM}).
	
	Consider the following \emph{function space $\calH(\be)$ of entire Dirichlet series induced by weight $\be$}:
	\bgeqn\label{eq:HD}
	\calH(\be)=\left\{f(z)=\sumfone\textnormal{ entire}\colon\|f\|_{\calH(\be)}:=\|(a_n)\|_{\ell^2_\beta}<+\infty \right\}.
	\edeqn
	Here, when we write $f(z)=\sumfone$, we mean the entire function $f$ is represented by the series on the right-hand side.

	The space $\calH(\be)$ is an inner product space, where
	\[\langle f,g\rangle_{\calH(\be)}=\sumn a_n\overline{b_n}\be_n^2,\]
	for any $f(z)=\sumfone$ and $g(z)=\sumn b_ne^{-\lam_nz}$ in $\calH(\be).$
	
	\medskip
	
	Depending on $\be$, the induced space $\calH(\be)$ may not be complete in its norm, and so it is not necessarily a Hilbert space. The following theorem  from \cite{HHK} provides a criterion of the weight $\be$ for $\calH(\be)$ to be complete.
	\begin{thm}  \label{T:Hilbert} The space $\calH(\be)$ of entire Dirichlet series induced by a sequence of positive real numbers $\be=(\be_n)$, as defined in \eqref{eq:HD}, is a Hilbert space if and only if the following condition $(E)$ holds,
		$$
		\label{eq:Hilbert} \liminf_{n\to\infty}\frac{\log\be_n}{\lambda_n}=+\infty.\leqno{(E)}
		$$
	\end{thm}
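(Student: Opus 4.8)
The plan is to reduce the completeness of $\calH(\be)$ to a statement about which coefficient sequences in $\ell^2_\be$ generate entire functions. Consider the coefficient map $T\colon\calH(\be)\to\ell^2_\be$ sending $f=\sumfone$ to $(a_n)$. By the very definition of the two norms and by uniqueness of the Dirichlet representation, $T$ is a well-defined linear isometry. Since $\ell^2_\be$ is itself a Hilbert space, $\calH(\be)$ is complete if and only if the range $T(\calH(\be))$ is closed in $\ell^2_\be$. The first key observation is that $T(\calH(\be))$ always contains every finitely supported sequence, because a finite exponential sum is entire; as such sequences are dense in $\ell^2_\be$, the range is always dense. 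A dense subspace is closed only when it equals the whole space, so $\calH(\be)$ is a Hilbert space if and only if $T$ is onto, i.e. if and only if every $(a_n)\in\ell^2_\be$ satisfies $D=-\infty$, which under Convention \ref{Conv:1} is precisely the condition that $\sumfone$ be entire.

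For the sufficiency of $(E)$ I would argue directly. If $(a_n)\in\ell^2_\be$, then the terms $|a_n|^2\be_n^2$ tend to $0$, so $|a_n|\be_n$ is bounded, say $|a_n|\le C/\be_n$. Hence $\frac{\log|a_n|}{\lam_n}\le\frac{\log C}{\lam_n}-\frac{\log\be_n}{\lam_n}$. Since $\lam_n\to+\infty$ and $(E)$ forces $\frac{\log\be_n}{\lam_n}\to+\infty$, the right-hand side tends to $-\infty$, giving $D=-\infty$ and thus an entire function. Therefore $(E)$ makes $T$ surjective, and $\calH(\be)$ is a Hilbert space.

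The necessity of $(E)$ is the part I expect to require the most care, and it is where Convention \ref{Conv:1} enters. If $(E)$ fails, then $\liminf_{n\to\infty}\frac{\log\be_n}{\lam_n}<+\infty$, so there are a finite constant $M$ and a strictly increasing subsequence $(n_k)$ with $\be_{n_k}\le e^{M\lam_{n_k}}$ for every $k$. I would construct a counterexample to surjectivity by setting $a_{n_k}=e^{-c\lam_{n_k}}$ and $a_n=0$ otherwise, for a constant $c$ to be chosen. This sequence has $D=-c$ finite, so it does \emph{not} represent an entire function. The only remaining point is to pick $c$ so that $(a_n)\in\ell^2_\be$, and here the hypothesis $L<+\infty$ is essential: it gives $\lam_n\ge\frac{\log n}{L+\eps}$ for large $n$, whence $\sum_k|a_{n_k}|^2\be_{n_k}^2\le\sum_k e^{2(M-c)\lam_{n_k}}\le\sum_k n_k^{-2(c-M)/(L+\eps)}$, a series that converges once $c>M+\frac{L+\eps}{2}$. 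With such a $c$, the sequence lies in $\ell^2_\be\setminus T(\calH(\be))$, so $T$ is not surjective and $\calH(\be)$ is not complete. The main obstacle is thus concentrated entirely in this last summability estimate: without $L<+\infty$ one could not guarantee that the constructed non-entire sequence is square-summable against the weight, and the equivalence would collapse.
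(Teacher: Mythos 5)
Your proof is correct, but there is nothing in this paper to compare it against: Theorem \ref{T:Hilbert} is imported from \cite{HHK} without proof, so your argument has to be judged on its own merits, and it holds up. The reduction is sound: $T$ is a linear isometry by uniqueness of Dirichlet representations and the definition of the norms; $\calH(\be)$ is complete iff $T(\calH(\be))$ is closed in $\ell^2_\be$; the range always contains the finitely supported sequences and is therefore dense; hence completeness is equivalent to surjectivity of $T$, i.e.\ to every $(a_n)\in\ell^2_\be$ having $D=-\infty$, where you legitimately invoke the fact quoted before Convention \ref{Conv:1} that (under $L<+\infty$) a general Dirichlet series represents an entire function iff $D=-\infty$. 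The sufficiency estimate ($|a_n|\be_n$ bounded together with $(E)$ forces $D=-\infty$) is correct, with the trivial caveat that division by $\lam_n$ should be restricted to large $n$, where $\lam_n>0$. The necessity construction is also correct: failure of $(E)$ yields a finite $M$ and a subsequence with $\be_{n_k}\le e^{M\lam_{n_k}}$; taking $a_{n_k}=e^{-c\lam_{n_k}}$ with $c>M+\frac{L+\eps}{2}$ makes $D=-c>-\infty$ (so the series is not entire), while $L<+\infty$ gives $\lam_n\ge \frac{\log n}{L+\eps}$ for large $n$ and hence $\sum_k|a_{n_k}|^2\be_{n_k}^2\le\sum_k n_k^{-2(c-M)/(L+\eps)}<+\infty$. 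Your closing diagnosis --- that Convention \ref{Conv:1} is needed precisely to make the non-entire counterexample square-summable against the weight --- is exactly the right identification of where $L<+\infty$ enters the equivalence.
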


	A direct consequence of this theorem is that if $(E)$ holds, the space $\calH(\be)$ automatically becomes a Hilbert space of entire functions, so  we can drop the condition ``entire" in  \eqref{eq:HD}.
	
	Note that when $(E)$ holds, if $0\in\Lam$, i.e., $\lam_1=0$, then the space contains all nonzero constants, while it contains no nonzero constants if $\lam_1>0$. Obviously, Theorem \ref{T:Hilbert} is unaffected regardless $\lam_1$ is $0$ or not. Hence, we adopt the following convention.
	
	\begin{con} Unless otherwise stated, we assume condition $(E)$ always holds. We denote by $\HbE$ the following Hilbert space of entire Dirichlet series
		\[\HbE=\left\{f(z)=\sumfone\ :\ \|f\|_{\HbE}=\Big(\sumn |a_n|^2\be_n^2\Big)^{1/2}<+\infty\right\},\]
		and without ambiguity, we denote the norm of any function $f\in\HbE$ simply by $\|f\|$.
	\end{con}

	\medskip
	
	Suppose $f$ is an entire function. The \emph{ordinary growth order} of $f$ is the limit
	\[\rho=\limsup_{r\to\infty}  \frac{\log(\log \|f\|_{\infty,r})}{\log r},\]
	where $\|f\|_{\infty,r}=\sup_{|z|\le r}|f(z)|$ $(r\ge0)$.
	
	If the series \eqref{eq:Dirichlet} represents an entire function $f$, the \emph{Ritt order $\rho_R$} of $f$ is defined to be the limit
	\[\rho_R:=\limsupn\dfrac{\lam_n \log \lam_n}{\log\frac{1}{|a_n|}}.\]
	Ritt orders of entire Dirichlet series are studied in \cite{R28}.
	
	Suppose in addition that $f$ has Ritt order $0$, write $z=\sigma+ti\ (\sigma,t\in\R)$, Reddy \cite{Red66} defined the \emph{logarithmic orders} of $f$ as follows:
	\begin{align*}
		\rho_R(\mathfrak L):&=\limsup_{\sigma\to\infty} \dfrac{\log \log M(\sigma)}{\log(-\sigma)},\\
		\rho_*(\mathfrak L):&=\limsup_{\sigma\to\infty} \dfrac{\log \log \mu(\sigma)}{\log(-\sigma)},\\
		\rho_c(\mathfrak L):&=\limsupn \dfrac{\log \lam_n}{\log\left(\frac{1}{\lam_n}\log \frac{1}{|a_n|}\right)},
	\end{align*}
	where $M(\sigma) = \sup_{t\in\R}| f(\sigma + it)|$ and $\mu(\sigma)=\max_{n\ge 0} \{|a_n|e^{-\lam_n\sigma}\}$. He also showed that $$\rho_R(\mathfrak L)=\rho_*(\mathfrak L)=\rho_c(\mathfrak L)+1\ge 1.$$

	The lemma below explains a correspondence between the space $\HbE$ and the growth orders of its elements.
	\begin{lem}[{\cite{HHK}}]\label{L:fin-ord1} Let $\be$ be a sequence of positive real numbers. Then \emph{every} element of $\HbE$ represents an entire function with finite logarithmic orders if and only if the following condition holds,
	\[\exists\al>0:\ \liminf_{n\to\infty}\frac{\log\be_n}{\lam_n^{1+\al}}=+\infty.\eqno{(S)}\]
	\end{lem}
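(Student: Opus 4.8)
The plan is to first translate the analytic statement ``$f$ has finite logarithmic orders'' into a purely arithmetic condition on the coefficients $(a_n)$, and then to compare that condition with $(S)$ termwise. Since Reddy's relation $\rho_R(\fL)=\rho_*(\fL)=\rho_c(\fL)+1$ makes finiteness of all three logarithmic orders equivalent to $\rho_c(\fL)<+\infty$, it suffices to control $\rho_c(\fL)$. For $f=\sumfone$ with $a_n\ne 0$, set $c_n:=-\frac{1}{\lam_n}\log|a_n|$. Using $\rho_R=\limsupn\frac{\lam_n\log\lam_n}{-\log|a_n|}=\limsupn\frac{\log\lam_n}{c_n}$ for the Ritt order together with $\rho_c(\fL)=\limsupn\frac{\log\lam_n}{\log c_n}$ (valid once $\rho_R=0$), I would show that $f$ has finite logarithmic orders if and only if $\liminf_{n\to\infty}\frac{\log c_n}{\log\lam_n}>0$, equivalently if and only if there is $\del>0$ with $-\log|a_n|\ge\lam_n^{1+\del}$ for all large $n$ (vanishing $a_n$ contribute $0$ to every $\limsup$ and cause no trouble). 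The point to verify is that $c_n\ge\lam_n^{\del}$ already forces $\rho_R=0$, so that the logarithmic orders are genuinely defined; thus the single inequality $-\log|a_n|\ge\lam_n^{1+\del}$ captures both requirements at once.

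For sufficiency, assume $(S)$ holds with exponent $\al$. Then $\liminf_{n\to\infty}\frac{\log\be_n}{\lam_n^{1+\al}}=+\infty$ forces $\log\be_n\ge\lam_n^{1+\al}$ for all large $n$. For any $f\in\HbE$ the membership $\sumn|a_n|^2\be_n^2<+\infty$ gives $|a_n|\be_n\to 0$, hence $-\log|a_n|\ge\log\be_n\ge\lam_n^{1+\al}$ eventually. By the reduction above (with $\del=\al$, uniformly in $f$) every $f$ has finite logarithmic orders, and in fact $\rho_c(\fL)\le 1/\al$.

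For necessity I would argue by contrapositive, constructing a single $f\in\HbE$ without finite logarithmic orders under the assumption that $(S)$ fails. The negation of $(S)$ says $\liminf_{n\to\infty}\frac{\log\be_n}{\lam_n^{1+\al}}<+\infty$ for every $\al>0$; applied to $\al=1/k$ it yields, for each $k$, a finite constant $C_k$ and infinitely many indices $n$ with $\log\be_n\le C_k\lam_n^{1+1/k}$. I then select a strictly increasing subsequence $(n_k)$ with $\log\be_{n_k}\le C_k\lam_{n_k}^{1+1/k}$ and with $\lam_{n_k}$ so large that $\log(2C_k)\le\frac1k\log\lam_{n_k}$, setting $a_{n_k}=1/(k\be_{n_k})$ and $a_n=0$ otherwise. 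Then $\sumn|a_n|^2\be_n^2=\sum_k k^{-2}<+\infty$, so $f\in\HbE$, whereas $c_{n_k}=\frac{\log k+\log\be_{n_k}}{\lam_{n_k}}\le 2C_k\lam_{n_k}^{1/k}$ gives $\frac{\log c_{n_k}}{\log\lam_{n_k}}\le\frac{2}{k}\to 0$. Hence $\liminf_{n\to\infty}\frac{\log c_n}{\log\lam_n}=0$, so $f$ fails the coefficient criterion and does not have finite logarithmic orders.

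I expect the necessity construction to be the main obstacle: one must simultaneously keep the series summable in $\HbE$ and make the coefficients as large as the failure of $(S)$ permits, and the diagonal choice of $n_k$ across the varying exponents $\al=1/k$, together with absorbing the spurious $\log k$ and $\log C_k$ terms by taking $\lam_{n_k}$ large, is the delicate step. By contrast, the sufficiency direction and the coefficient reduction are essentially routine once Reddy's identities are in hand.
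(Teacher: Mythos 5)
The paper itself gives no proof of this lemma (it is quoted verbatim from \cite{HHK}), so your argument can only be measured against the statement itself. Your coefficient reduction is sound: via Reddy's identity $\rho_R(\fL)=\rho_c(\fL)+1$, finiteness of the logarithmic orders of an entire $f$ is equivalent to the existence of $\del>0$ with $-\log|a_n|\ge\lam_n^{1+\del}$ for all large $n$, and your observation that this inequality already forces Ritt order $0$ is the right point to check. The sufficiency direction is also correct: $(S)$ together with $|a_n|\be_n\to0$ gives $-\log|a_n|\ge\log\be_n\ge\lam_n^{1+\al}$ eventually, hence $D=-\infty$ and $\rho_c(\fL)\le1/\al$.

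The necessity direction, however, has a genuine gap: nothing in your construction guarantees that the series you build represents an \emph{entire} function. By the paper's definition \eqref{eq:HD}, members of $\HbE$ are entire by definition, so a non-entire series of finite $\ell^2_\be$-norm is simply not an element of $\HbE$ and witnesses nothing. The gap is not hypothetical: take $\be_n\equiv1$ (which fails $(S)$, with $C_k=1$ for every $k$). Your recipe then produces $a_{n_k}=1/k$, so $c_{n_k}=\frac{\log k}{\lam_{n_k}}$, which your own largeness requirements on $\lam_{n_k}$ drive to $0$; hence $D=\limsup_n\frac{\log|a_n|}{\lam_n}=0\ne-\infty$ and the series has a finite abscissa of convergence. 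Moreover, your coefficient criterion presupposes entireness (Reddy's identities concern entire series of Ritt order zero), so even under the laxer reading of $\HbE$ as all formal series of finite norm, your final inference ``fails the criterion $\Rightarrow$ infinite logarithmic orders'' is not meaningful for a non-entire $f$; you would at least need an explicit dichotomy. The repair is cheap but must be made: add an entireness guard to the coefficients, say (normalizing $C_k\ge1$)
\[
a_{n_k}=\exp\Bigl(-\max\bigl\{\log k+\log\be_{n_k},\ \lam_{n_k}(\log\lam_{n_k})^2\bigr\}\Bigr),\qquad a_n=0\ \text{otherwise}.
\]
Then still $|a_{n_k}|\be_{n_k}\le1/k$, so the norm is finite; $c_{n_k}\ge(\log\lam_{n_k})^2\to\infty$, so $D=-\infty$ and $f$ is a genuine element of $\HbE$, of Ritt order $0$ because $\frac{\log\lam_{n_k}}{c_{n_k}}\le\frac{1}{\log\lam_{n_k}}\to0$, so Reddy's identities apply; and
\[
\log c_{n_k}\le\max\Bigl\{\log(2C_k)+\tfrac1k\log\lam_{n_k},\ 2\log\log\lam_{n_k}\Bigr\}\le\tfrac2k\log\lam_{n_k}
\]
once $\lam_{n_k}$ is chosen large enough at each step, so $\rho_c(\fL)=+\infty$ exactly as you intended.
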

	
	If $(S)$ holds, the Hilbert space is denoted  by $\HbS$ in \cite{HHK}. Clearly, condition $(S)$ is stronger than condition $(E)$, thus spaces $\HbS$ are special cases of the general class $\HbE$. We note the following relation between logarithmic orders and ordinary orders.

	\begin{lem}[\cite{HK12}]\label{L:fin-ord2} Every entire Dirichlet series of finite logarithmic orders has finite (ordinary) order.
	\end{lem}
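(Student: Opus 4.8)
The plan is to bound the ordinary maximum modulus $M_\infty(r)=\sup_{|z|\le r}|f(z)|$ by the size of the Dirichlet series on the leftmost vertical line $\Re(z)=-r$, and then to feed in the hypothesis of finite logarithmic order. The decisive first observation is that, since every $\lam_n\ge 0$ (our standing convention on $\Lam$), the series can only grow toward the left half-plane: for any $z$ with $|z|\le r$ we have $\sigma:=\Re(z)\ge -r$, so $|e^{-\lam_n z}|=e^{-\lam_n\sigma}\le e^{\lam_n r}$, and hence
\[ M_\infty(r)\ \le\ \sum_{n=1}^\infty|a_n|e^{\lam_n r}\ =\ S(-r),\qquad S(\sigma):=\sum_{n=1}^\infty|a_n|e^{-\lam_n\sigma}. \]
Because the series is entire we have $D=\limsupn\frac{\log|a_n|}{\lam_n}=-\infty$, so $S(\sigma)$ is finite for every real $\sigma$; thus the disk order of $f$ is dominated by the growth of the half-plane sum $S(\sigma)$ as $\sigma\to-\infty$.

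Next I would replace $S(\sigma)$ by the maximum term $\mu(\sigma)=\max_n|a_n|e^{-\lam_n\sigma}$, which is the quantity entering the logarithmic order $\rho_*(\fL)$. This is exactly where Convention \ref{Conv:1} ($L<\infty$) is used: for any $\eps>L$ the series $K_\eps:=\sum_n e^{-\lam_n\eps}$ converges, and factoring out the maximum term gives the Valiron-type estimate
\[ S(\sigma)\ =\ \sum_n|a_n|e^{-\lam_n(\sigma-\eps)}e^{-\lam_n\eps}\ \le\ \mu(\sigma-\eps)\sum_n e^{-\lam_n\eps}\ =\ K_\eps\,\mu(\sigma-\eps). \]
Combining this with the first step yields $M_\infty(r)\le K_\eps\,\mu(-r-\eps)$, so that $\log M_\infty(r)\le \log K_\eps+\log\mu(-r-\eps)$.

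Finally I would invoke finiteness of the logarithmic orders. Through Reddy's identity $\rho_*(\fL)=\rho_c(\fL)+1$, the hypothesis gives $\rho_*:=\rho_*(\fL)<\infty$, i.e. $\limsup_{\sigma\to-\infty}\frac{\log\log\mu(\sigma)}{\log(-\sigma)}=\rho_*$. Hence for $-\sigma$ large we have $\log\mu(\sigma)\le(-\sigma)^{\rho_*+1}$; substituting $\sigma=-r-\eps$ gives $\log M_\infty(r)\le \log K_\eps+(r+\eps)^{\rho_*+1}$, and taking two further logarithms produces
\[ \rho=\limsup_{r\to\infty}\frac{\log\log M_\infty(r)}{\log r}\ \le\ \rho_*+1\ <\ \infty, \]
which is precisely finiteness of the ordinary order (the constant case $f\equiv\text{const}$ being trivial).

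I expect the one genuinely delicate point to be the very first reduction $M_\infty(r)\le S(-r)$, and more precisely the structural fact behind it: that the disk maximum of an entire Dirichlet series is governed by its growth in the left half-plane and does not acquire an extra factor large enough to spoil finiteness of the order. The nonnegativity $\lam_n\ge 0$ is exactly what makes the bound clean---were negative frequencies permitted the series could also blow up to the right and this estimate would collapse. By contrast, the passage from $S$ to $\mu$ is the routine Valiron estimate once $L<\infty$, and the concluding computation is a direct manipulation of the definitions of $\rho_*(\fL)$ and $\rho$, so neither should present real difficulty.
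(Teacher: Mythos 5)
Your argument is correct and complete. One important point of comparison: the paper itself does not prove this lemma at all --- it is quoted from \cite{HK12} as a known result --- so there is no internal proof to measure your attempt against; what you have produced is a valid, self-contained justification. The three steps all check out. First, since every $\lam_n\ge 0$, any $z$ with $|z|\le r$ has $\Re(z)\ge -r$, so $M_\infty(r)\le S(-r)$ as you claim. Second, the maximal term $\mu(\sigma)$ exists because entirety forces $D=-\infty$, hence $|a_n|e^{-\lam_n\sigma}\to 0$ for each fixed $\sigma$, and the standing convention $L<+\infty$ makes $K_\eps=\sum_n e^{-\lam_n\eps}$ finite for any $\eps>L$ (since $\lam_n>\log n/(L+\delta)$ eventually, giving $e^{-\lam_n\eps}<n^{-\eps/(L+\delta)}$ with exponent exceeding $1$); this yields the Valiron bound $S(\sigma)\le K_\eps\,\mu(\sigma-\eps)$. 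Third, you correctly read the paper's definition $\limsup_{\sigma\to\infty}\frac{\log\log\mu(\sigma)}{\log(-\sigma)}$ as a limit superior as $\sigma\to-\infty$ (the subscript in the paper is evidently a typo, since $\log(-\sigma)$ is otherwise undefined), and then $\log\mu(-r-\eps)\le (r+\eps)^{\rho_*+1}$ for large $r$ gives $\rho\le\rho_*(\fL)+1<\infty$ after two logarithms. The degenerate case where $f$ is constant (so that $\mu$ and $M_\infty$ stay bounded and the iterated logarithms are meaningless) is order $0$ and you dispose of it explicitly. Two cosmetic remarks: the appeal to Reddy's identity is unnecessary, since finiteness of $\rho_*(\fL)$ is already part of the hypothesis ``finite logarithmic orders''; and your final bound $\rho\le\rho_*+1$ is cruder than what a more careful bookkeeping (keeping the exponent $\rho_*+\delta$ instead of $\rho_*+1$) would give, namely $\rho\le\rho_*$, but crudeness is harmless here because only finiteness is asserted.
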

	
	Lemmas \ref{L:fin-ord1} and \ref{L:fin-ord2} imply that every element of a space $\HbS$ is an entire function of finite order, which explains why Lemma \ref{L:Pol} was used in \cite{HHK} to derive a criterion of bounded composition operators on $\HbS$.
	
	Nevertheless, the space $\HbS$ is quite small, in the sense of Example~\ref{ex:noHbS} below. In fact, the class of $\HbS$ is the smallest class considered in \cite{HHK}.
	
	\begin{exa}\label{ex:noHbS}
		Let $\lam_n=n$. Clearly $L=0$. Consider the entire function $f(z)=e^{e^{-z}}-1$. We can verify that $f$ has infinite growth order, so Lemma \ref{L:fin-ord2} implies that there is no weight $\be$ satisfying $(S)$ such that $f$ is representable by series in the induced space $\HbS$.
		
		What about the existence of a space $\HbE$ that contains $f$? The answer is positive. Consider $\be_n=\sqrt{n!}$, we can verify that $\be=(\be_n)$ satisfies $(E)$. We have
		$$f(z)=e^{e^{-z}}-1=\sumn \frac{e^{-nz}}{n!}=\sumn\frac{1}{n!}e^{-\lam_nz}$$
		So $\|f\|^2=\sumn (n!)^{-1}=e-1<+\infty$. This shows $f$ belongs to the space $\HbE$ induced by $\be$.  		
	\end{exa}

	Since we are working with the general class $\HbE$, from now on, we do not need any results about $\HbS$.
	
	\section{Reproducing kernel Hilbert spaces $\HbE$}\label{sec:RKHS}
	
	A (complex) separable Hilbert space $\mathscr{H}$ of functions from a non-empty set $G\subseteq\C$ to $\C$ is called a \emph{reproducing kernel Hilbert space} (RKHS) if for every $y\in G$, the evaluation functional $\del_y:f\mapsto f(y)$ ($f\in \mathscr{H}$) is bounded.
	
	By Riesz Representation Theorem, there exists a unique element $k_y\in \mathscr{H}$ such that  $f(y)=\langle f,k_y\rangle_{\mathscr{H}}$ for every $f\in \mathscr{H}$. We call $k_y$ the \emph{reproducing kernel at the point} $y$.
	
	The function $K:G\times G\to \C$ defined by
	$$K(x,y)=\langle k_y,k_x\rangle_{\mathscr{H}}=k_y(x),\quad x,y\in \mathscr{H},$$
	is called the \emph{reproducing kernel for} $\mathscr{H}$. It is well known that if a collection of elements $\{e_j\}_{j=1}^\infty$ is an orthonormal basis for $\mathscr{H}$, then
	\bgeqn \label{eq:RKHSformula} K(x,y) = \sum_{j=1}^\infty e_{j}(x)\overline{{e}_j(y)},\edeqn
	where the convergence is pointwise for $x,y\in \mathscr{H}$ (see the famous article \cite{Ar50}).
	
	\medskip
	
	We show in the following proposition that if all elements of $\HbE$ are entire Dirichlet series, i.e., if $\be$ satisfies $(E)$, then $\HbE$ is a reproducing kernel Hilbert space.
	\begin{prop}\label{RKHS}
		Let $\be=(\be_n)$ satisfy condition $(E)$. Then the space $\HbE$ induced by $\be$ is a complex reproducing kernel Hilbert space with the reproducing kernel $K:\C\times \C\to\C$ given by
		\bgeqn K(z,w)=k_w(z)=\sum_{n=0}^{\infty}\frac{e^{-\lam_n(\overline{w}+z)}}{\be_n^2}.\label{eq:RKHS}\edeqn
		The convergence is uniform on compact subsets of $\C\times\C$.
	\end{prop}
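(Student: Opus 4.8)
The plan is to produce an explicit orthonormal basis of $\HbE$ and use it for two purposes at once: to verify that every evaluation functional is bounded, and to compute the kernel through formula \eqref{eq:RKHSformula}. First I would set $e_n(z)=\be_n^{-1}e^{-\lam_n z}$ for each $n$. From the definition of the inner product, the functions $e^{-\lam_n z}$ (the element whose coefficient sequence has $1$ in the $n$-th slot and $0$ elsewhere) satisfy $\langle e^{-\lam_n z}, e^{-\lam_m z}\rangle = \delta_{nm}\be_n^2$, so the family $\{e_n\}$ is orthonormal. It is also complete: any $f=\sumfone\in\HbE$ equals $\sumn (a_n\be_n)\,e_n$ with $\sumn|a_n\be_n|^2=\|f\|^2<+\infty$, so $f$ lies in the closed span of $\{e_n\}$. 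Hence $\{e_n\}_n$ is an orthonormal basis, and in particular $\HbE$ is separable.

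Next, to show that $\HbE$ is a RKHS I would bound each evaluation functional $\del_y$. For $f=\sumfone$ and $y\in\C$, the Cauchy--Schwarz inequality gives
\[ |f(y)| \le \sumn |a_n|\, e^{-\lam_n\Re(y)} \le \|f\|\left(\sumn \frac{e^{-2\lam_n\Re(y)}}{\be_n^2}\right)^{1/2}, \]
so everything reduces to the finiteness of $S(y):=\sumn \be_n^{-2}e^{-2\lam_n\Re(y)}$ for every $y$. This is the step I expect to be the main obstacle, and it is exactly where condition $(E)$ and Convention \ref{Conv:1} come into play. Given any $M>0$, condition $(E)$ yields $\be_n^2\ge e^{2M\lam_n}$ for all large $n$, so the general term is at most $e^{-2\lam_n(\Re(y)+M)}$. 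Choosing $M$ so large that $\Re(y)+M>0$ and invoking $L<+\infty$, which gives $\lam_n\ge(\log n)/(L+1)$ for large $n$, the term is then dominated by $n^{-p}$ with $p=2(\Re(y)+M)/(L+1)$; enlarging $M$ further makes $p>1$, and $S(y)<+\infty$ follows by comparison with a convergent $p$-series. This proves that $\del_y$ is bounded, with $\|\del_y\|\le S(y)^{1/2}$, so $\HbE$ is a reproducing kernel Hilbert space.

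Finally, with $\HbE$ established as a RKHS and $\{e_n\}$ an orthonormal basis, I would apply \eqref{eq:RKHSformula} to obtain
\[ K(z,w) = \sumn e_n(z)\overline{e_n(w)} = \sumn \frac{e^{-\lam_n z}\,\overline{e^{-\lam_n w}}}{\be_n^2} = \sumn \frac{e^{-\lam_n(\overline{w}+z)}}{\be_n^2}, \]
which is the asserted expression \eqref{eq:RKHS}. For uniform convergence on a compact set $K\subset\C\times\C$, I would note that $\Re(z),\Re(w)\ge -R$ there for some $R>0$, so each summand is bounded in modulus by $\be_n^{-2}e^{2R\lam_n}$, precisely the summand of $S$ with $\Re(y)=-R$. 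The estimate from the previous paragraph (now choosing $M$ large relative to $R$) shows $\sumn \be_n^{-2}e^{2R\lam_n}<+\infty$, so the Weierstrass $M$-test delivers absolute and uniform convergence on $K$; this simultaneously justifies the termwise manipulation used to derive the formula.
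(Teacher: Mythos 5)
Your proof is correct, and its skeleton---Cauchy--Schwarz to bound point evaluations, the orthonormal basis $q_n(z)=\be_n^{-1}e^{-\lam_n z}$, and Aronszajn's formula \eqref{eq:RKHSformula}---is the same as the paper's. Where you genuinely diverge is in how the two convergence issues are settled, and your treatment is the more careful one. The paper simply asserts that condition $(E)$ makes $M_z=\sum_{n}\be_n^{-2}e^{-2\lam_n\Re(z)}$ converge; you prove it by an explicit comparison: $(E)$ gives $\be_n^{-2}e^{-2\lam_n\Re(y)}\le e^{-2\lam_n(M+\Re(y))}$ for all large $n$, and Convention \ref{Conv:1} ($L<+\infty$) converts this into a convergent $p$-series. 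This correctly identifies that $L<+\infty$ is genuinely needed at this step---condition $(E)$ alone would not suffice if $\lam_n$ grew too slowly (e.g.\ $\lam_n=\log\log n$ with $\be_n=e^{(\log\log n)^2}$ satisfies $(E)$ yet $M_0=+\infty$)---a point the paper leaves implicit. For the uniform convergence on compacta, the paper views $k_w(z)$ as a Dirichlet series in $z$, checks $D=-\infty$, obtains uniform convergence on compact sets in each variable separately, and then passes to joint convergence on compact subsets of $\C^2$; that last passage is the least transparent part of the paper's argument. Your Weierstrass $M$-test bound $\be_n^{-2}e^{2R\lam_n}$, valid simultaneously in both variables on the compact set, yields joint absolute and uniform convergence in one stroke and also justifies the termwise manipulations. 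So both proofs follow the same route; yours is more self-contained and tighter at exactly the two places where convergence actually has to be checked.
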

	
	\begin{proof}
		Apply Cauchy--Schwarz inequality, we have
		\begin{align*}
			|f(z)|^2=\left|\sumf\right|^2&\le\left(\sumn \frac{e^{-2\lam_n\Re(z)}}{\be_n^2}\right)\left(\sumn|a_n|^2\be_n^2\right)\\
			&=\left(\sumn \frac{e^{-2\lam_n\Re(z)}}{\be_n^2}\right)\|f\|^2=M_z\|f\|^2.
		\end{align*}
		
		Note that since $\lim_{n\to\infty}\lam_n^{-1}\log \be_n=+\infty$, the series $M_z$ is convergent absolutely for any $z\in\C$. Hence for each complex $z$, there exists a corresponding constant $M_z>0$ such that $|f(z)|^2\leq M_z\|f\|^2$ for all $f\in\HbE$. Each evaluation functional $\del_z$ is thus bounded, which shows that $\HbE$ is an RKHS.
		
		We can verify that the \emph{probe functions} $q_n(z)=\be_n^{-1}e^{-\lam_n z}\ (n\ge1)$ forms an orthonormal basis of $\HbE$. From \eqref{eq:RKHSformula}, we have
		\begin{align*}
			K(z,w) & = \sum_{n=1}^\infty \be_n^{-1}e^{-\lam_n z}\be_n^{-1}e^{-\lam_n \overline{w}}=\sumn \be_n^{-2}e^{-\lam_n (z+\overline{w})}.
		\end{align*}
	
		Finally, consider $K(z,w)=k_w(z)$ as a Dirichlet series in variable $z$ and coefficients $a_n=\be_n^{-2}e^{-\lam_n\overline{w}}$. We can derive from condition $(E)$ that
		\[D=\limn \frac{\log |a_n|}{\lam_n}=\limn \left(-\overline{w}-2\frac{\log \be_n}{\lam_n}\right)=-\infty.\]
		
		By the discussion before Convention \ref{Conv:1}, the series converges absolutely on compact sets of $z$. We obtain the similar result if we exchange the role of $z$ and $w$. By the uniform convergence on compact sets of $\C$ for each variable, $K$ is uniformly convergent on compact subsets of $\C^2$. The proof is complete.
	\end{proof}
	
	\begin{rem}\hfil\label{R:RKHS}
		\begin{enumerate}[(a)]
			\item In the proof above, we can easily see that for any $w\in \C$,
			$$\|k_w\|^2=K(w,w)=\sumn \frac{e^{-2\lam_n\Re(w)}}{\be_n^2}.$$
			
			\item 	By a consequence of closed graph theorem, if a composition operator $\Cp$ is \textit{invariant}, that is, 
			if $\Cp( \mathscr{H})\subseteq  \mathscr{H}$, then it is automatically bounded. Thus, we don't have to deal with invariance and boundedness separately, since the two properties are equivalent for $\Cp$ acting on RKHSs.
		\end{enumerate}
	\end{rem}
	
	\section{Main results}\label{sec:HbE}
	In the sequel, we fix a sequence $\be=(\be_n)$ that satisfies $(E)$ and let $\HbE$ be the corresponding Hilbert space of Dirichlet series.
	
	We remind an important point, which is seen later, that the criteria for boundedness of $\Cp$ for the case $\lam_1>0$ and for the case $\lam_1=0$  are different. Recall that if $\lam_1=0$, the space $\HbE$ also includes all constants, and that the space contains no nonzero constants if ${\lam_1>0}$. The proof of the necessary condition in the latter case is also more sophisticated than the former, even though the idea used in the two proofs are similar. This fact is reflected in Propositions \ref{P:lam>0} and \ref{P:lam=0}.
	
	\subsection{Sufficient conditions}\label{sec:suf-cond}\hfill
	
	We can easily obtain the following sufficiency for the boundedness of $\Cp$ on $\HbE$.
	
	\begin{prop} \label{P:suf}
		Let $\vp$ be an entire function. Consider the statements below.
		\begin{enumerate}[(i)]
			\item $\vp$ is a constant function,
			\item $\vp(z)=z+b$ for some $b\in\C,\ \Re(b)\ge0$.
		\end{enumerate}
	
		The following are true:
		\begin{enumerate}[(a)]
			\item Suppose $\lam_1=0$. If either $(i)$ or $(ii)$ holds, then $\Cp$ is bounded.
			\item Suppose $\lam_1>0$. If $(ii)$ holds, then $\Cp$ is bounded.
		\end{enumerate}		
	\end{prop}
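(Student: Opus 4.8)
The plan is to treat the two sufficiency mechanisms separately, since statement $(ii)$ rests on a direct norm estimate while statement $(i)$ (relevant only in part $(a)$) exploits the reproducing kernel structure established in Proposition~\ref{RKHS}.

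For statement $(ii)$, which I would handle first because it covers parts $(a)$ and $(b)$ simultaneously, I would substitute $\vp(z)=z+b$ into an arbitrary $f(z)=\sumf\in\HbE$. This yields
\[
\Cp f(z)=f(z+b)=\sumn a_n e^{-\lam_n b}\,e^{-\lam_n z},
\]
so that $\Cp f$ is again a Dirichlet series with frequencies $\lam_n$ and coefficients $a_n e^{-\lam_n b}$. By uniqueness of the representation (the discussion preceding Convention~\ref{Conv:1}), the norm is computed directly as
\[
\|\Cp f\|^2=\sumn |a_n|^2 e^{-2\lam_n\Re(b)}\be_n^2 .
\]
Since $\lam_n\ge0$ and $\Re(b)\ge0$ force $e^{-2\lam_n\Re(b)}\le1$ for every $n$, this is bounded above by $\|f\|^2$. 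Hence $\Cp f\in\HbE$ and $\|\Cp\|\le 1$, regardless of whether $\lam_1=0$ or $\lam_1>0$.

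For statement $(i)$ under the hypothesis $\lam_1=0$, I would use that the constant function $\mathbf 1=e^{-\lam_1 z}$ then belongs to $\HbE$ (it is a scalar multiple of the probe function $q_1$), with $\|\mathbf 1\|=\be_1$. Writing $\vp\equiv c$, the image $\Cp f=f\circ\vp$ is the constant $f(c)$, i.e.\ $\Cp f=f(c)\,\mathbf 1\in\HbE$, so that $\|\Cp f\|=|f(c)|\,\be_1$. Because $\HbE$ is an RKHS, the evaluation functional $\del_c$ is bounded, giving $|f(c)|\le\|k_c\|\,\|f\|$ with $\|k_c\|^2=K(c,c)=\sumn \be_n^{-2}e^{-2\lam_n\Re(c)}$ as in Remark~\ref{R:RKHS}(a). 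Combining the two yields $\|\Cp f\|\le \be_1\|k_c\|\,\|f\|$, so $\Cp$ is bounded.

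There is no deep obstacle here; the only point requiring genuine care is the well-definedness of $\Cp$ as a map \emph{into} $\HbE$, i.e.\ that the image actually lies in the space before any norm estimate is meaningful. In case $(ii)$ this is automatic from the coefficient computation above, while in case $(i)$ it is precisely where the hypothesis $\lam_1=0$ enters, since a nonzero constant belongs to $\HbE$ only when $0\in\Lam$. This is also what explains the omission of $(i)$ from part $(b)$: when $\lam_1>0$ the space contains no nonzero constant, so composing with a constant $\vp$ would send most $f$ outside $\HbE$.
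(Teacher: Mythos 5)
Your proposal is correct and follows essentially the same route as the paper's proof: for $(ii)$ the same coefficient computation $\|\Cp f\|^2=\sumn |a_n|^2e^{-2\lam_n\Re(b)}\be_n^2\le\|f\|^2$ (the paper uses the marginally sharper constant $e^{-2\lam_1\Re(b)}$ obtained from monotonicity of $(\lam_n)$), and for $(i)$ the same identification of $\Cp f$ with the constant $f(c)$, estimated via boundedness of the evaluation functional, $\|\Cp f\|=|f(c)|\be_1\le\be_1\|k_c\|\|f\|$. Your explicit remark on well-definedness of the image in $\HbE$ matches the paper's explanation of why case $(i)$ is excluded when $\lam_1>0$.
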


	\begin{proof}
		$\bullet$ Note that the difference between $(a)$ and $(b)$ is that the case ``$\vp$ is a constant function" is not included when $\lam_1>0$. This can be seen as follows. Take, for instance, $f(z)=e^{-\lam_1z}\in\HbE$. If $\vp(z)=z_0$ for all $z\in\C$, then $\Cp f(z)=e^{-\lam_1z_0}$, which is a nonzero constant, and thus not representable in $\HbE$ if $\lam_1>0$. 	
		
		$\bullet$ Suppose $\lam_1=0$. Clearly if $(i)$ happens, i.e., $\vp(z)=z_0$ $(z\in\C)$ for some $z_0\in\C$, then
		\[\|\Cp f\|=\|f(z_0)\|=|f(z_0)|\be_1\le \be_1\|K_{z_0}\|\|f\|,\]
		by Cauchy--Schwarz inequality. Hence, $\Cp$ is bounded in this case.
		
		$\bullet$ We will use the following argument to prove that $(ii)$ implies ``$\Cp$ is bounded" in both cases $\lam_1>0$ and $\lam_1=0$.	
	
		Suppose $(ii)$ holds, we have
		$$\Cp f(z)=\sumn a_ne^{-\lam_n(z+b)}=\sumn a_ne^{-\lam_nb} e^{-\lam_nz},$$
		for any $f(z)=\sumf\in\HbE$.
		
		Since $\Re(b)\ge0$ and $(\lam_n)$ is increasing, we have
		\begin{align}\label{eq:norm1}\|\Cp f\|^2&=\sumn |a_n|^2e^{-2\lam_n\Re(b)}\be_n^2\notag\\&\le e^{-2\lam_1\Re(b)}\sumn |a_n|^2\be_n^2=e^{-2\lam_1\Re(b)}\|f\|.\end{align}
		
		This shows $\Cp$ is bounded. The proof is complete.
	\end{proof}
	
	\subsection{Necessary conditions}\label{sec:nec-cond}
	The sufficient conditions in Proposition \ref{P:suf} turn out to be necessary as well. Our aim is to establish the proof for this necessity.
	
	\smallskip
	
	The following lemma is needed for next results. An analogous version of this lemma can be found in \cite{Red66}, but we also provide a proof here for the sake of completeness. 
	\begin{lem}\label{L:coeff}
		Suppose $f\in\HbE$ has the representation
		\[f(z)=\sumf\qquad (a_n,z\in\C).\]
		Then for any $\sigma\in\R$, for any $n\ge1$,
		\bgeqn a_n=\lim_{t\to+\infty}\frac{1}{2ti}\int_{\sigma-ti}^{\sigma+ti} f(z)e^{\lam_nz}dz,\label{eq:coeff}\edeqn
		where the integral is taken on the line segment from $\sigma-ti$ to $\sigma+ti$
	\end{lem}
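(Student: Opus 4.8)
The plan is to substitute the series for $f$ into the integral, integrate term by term, and exploit the near-orthogonality of the exponentials $e^{i(\lam_n-\lam_m)s}$ on a symmetric interval. First I would parametrize the segment by $z=\sigma+is$, $s\in[-t,t]$, so that $dz=i\,ds$ and
\[\frac{1}{2ti}\int_{\sigma-ti}^{\sigma+ti}f(z)e^{\lam_nz}\,dz=\frac{1}{2t}\int_{-t}^{t}f(\sigma+is)e^{\lam_n(\sigma+is)}\,ds.\]
Then I would replace $f(\sigma+is)$ by its Dirichlet expansion $\sum_{m=1}^\infty a_me^{-\lam_m(\sigma+is)}$. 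Since $f$ is entire, this series converges absolutely and uniformly on the compact segment $\{\sigma+is:s\in[-t,t]\}$, and multiplication by the bounded factor $e^{\lam_n(\sigma+is)}$ preserves uniform convergence; hence term-by-term integration over the finite segment is legitimate and produces
\[\frac{1}{2t}\sum_{m=1}^\infty a_me^{(\lam_n-\lam_m)\sigma}\int_{-t}^{t}e^{i(\lam_n-\lam_m)s}\,ds.\]

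Next I would evaluate each inner integral. For $m=n$ the integrand equals $1$, giving a contribution of exactly $a_n$. For $m\ne n$, strict monotonicity of $(\lam_n)$ guarantees $\lam_n-\lam_m\ne0$, so a direct computation gives
\[\frac{1}{2t}\int_{-t}^{t}e^{i(\lam_n-\lam_m)s}\,ds=\frac{\sin\big((\lam_n-\lam_m)t\big)}{t(\lam_n-\lam_m)}.\]
Consequently the whole expression equals
\[a_n+\sum_{m\ne n}a_me^{(\lam_n-\lam_m)\sigma}\,\frac{\sin\big((\lam_n-\lam_m)t\big)}{t(\lam_n-\lam_m)},\]
and the task reduces to showing that the error sum vanishes as $t\to+\infty$.

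Finally I would control the error uniformly in $t$. Set $\del_n:=\min_{m\ne n}|\lam_n-\lam_m|$, which is strictly positive because the nearest frequencies to $\lam_n$ are its two immediate neighbours (only finitely many gaps can be small, while $\lam_m-\lam_n\to+\infty$). Using $|\sin|\le1$ together with the absolute convergence of the Dirichlet series at the real point $\sigma$, namely $C(\sigma):=\sum_{m\ge1}|a_m|e^{-\lam_m\sigma}<+\infty$, I would estimate
\[\Big|\sum_{m\ne n}a_me^{(\lam_n-\lam_m)\sigma}\frac{\sin\big((\lam_n-\lam_m)t\big)}{t(\lam_n-\lam_m)}\Big|\le\frac{e^{\lam_n\sigma}}{t\,\del_n}\sum_{m\ne n}|a_m|e^{-\lam_m\sigma}\le\frac{e^{\lam_n\sigma}C(\sigma)}{t\,\del_n},\]
which tends to $0$ as $t\to+\infty$. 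Letting $t\to+\infty$ then yields \eqref{eq:coeff}.

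The delicate points are the two limit passages: term-by-term integration over the finite segment, justified by uniform convergence on a compact set, and the passage $t\to+\infty$ inside the infinite sum, justified by the uniform-in-$t$ bound above. I expect this last uniform estimate to be the crux, since it is exactly the positive frequency gap $\del_n>0$ combined with absolute convergence of the series that lets me sum the error terms before taking the limit; without the lower bound $\del_n$ the factors $1/(\lam_n-\lam_m)$ could not be controlled uniformly.
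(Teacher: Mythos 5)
Your proof is correct and follows essentially the same route as the paper's: substitute the Dirichlet expansion of $f$, integrate term by term over the segment (justified by uniform convergence on the segment), evaluate the elementary integrals to get $a_n$ plus a sum of terms $\frac{\sin\left((\lam_n-\lam_m)t\right)}{t(\lam_n-\lam_m)}$, and let $t\to+\infty$. If anything, your last step is more careful than the paper's, which justifies the passage $t\to+\infty$ only by appealing to ``uniform convergence of the series on the right-hand side''; your explicit bound $e^{\lam_n\sigma}C(\sigma)/(t\,\del_n)$, obtained from the positive frequency gap $\del_n$ and absolute convergence at $\sigma$, is exactly the uniform-in-$t$ estimate that makes that interchange of limit and sum rigorous.
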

	
	\begin{proof}
		Fix a particular $n$. Define $\mu_k=\lam_n-\lam_k$. Multiply both sides of $f$ by $e^{\lam_nz}$, we have
		\bgeqn\label{eq:new} f(z)e^{\lam_nz}=a_1e^{\mu_1z}+a_2e^{\mu_2z}+a_3e^{\mu_3z}+\dots\edeqn
		
		For any $\sigma\in\R$ and $t>0$, we integrate both sides of \eqref{eq:new} on the line segment from $\sigma-ti$ to $\sigma+ti$. Since $f(z)e^{\lam_nz}$ is uniformly convergence for all $z$, we can integrate term by term on the right-hand side to obtain
		\bgeqn\label{eq:int}\int_{\sigma-ti}^{\sigma+ti} f(z)e^{\lam_nz}dz = \sum_{k=0}^\infty a_n \int_{\sigma-ti}^{\sigma+ti} e^{\mu_kz} dz.\edeqn
		
		Note that for any $\mu\in\R$,
		\bgeqn\frac{1}{2ti}\int_{\sigma-ti}^{\sigma+ti} e^{\mu z} dz
		=\begin{cases}1\quad&\hbox{if $\mu=0$,}\\
		\dfrac{e^{\mu \sigma}}{\mu t} \sin(\mu t)&\hbox{if $\mu\ne0$}.
		\end{cases}\notag\edeqn
		
		Thus, \eqref{eq:int} is equivalent to
		\[\frac{1}{2ti}\int_{\sigma-ti}^{\sigma+ti} f(z)e^{\lam_nz}dz=a_n+\sum_{k\ne n} a_k\dfrac{e^{\mu_k \sigma}}{\mu_k t}\sin(\mu_k t).\]
		
		Letting $t\to\infty$ on both sides, and taking into account the uniform convergence of the series on the right-hand side, we obtain \eqref{eq:coeff}.
	\end{proof}

	We also need the following familiar fact.
	\begin{lem}\label{adj}
		Suppose a composition operator $\Cp$, induced by an entire function $\vp$, maps $\HbE$ to itself. Then the adjoint operator $\Cp^*$ of $\Cp$ satisfies
		$$\Cp^*k_w=k_{\vp(w)},\quad \forall w\in\C,$$
		where $k_w$ is the reproducing kernel at $w$ as defined in \eqref{eq:RKHS}.
	\end{lem}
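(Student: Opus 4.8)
The plan is to carry out the standard reproducing-kernel computation for adjoints of composition operators, relying on two facts established earlier: the reproducing property $f(y)=\langle f,k_y\rangle$ valid for every $f\in\HbE$ and every $y\in\C$ (Proposition \ref{RKHS}), together with the definition of the Hilbert-space adjoint. First I would fix an arbitrary $w\in\C$. Since $\Cp$ maps $\HbE$ into itself, it is bounded (by Remark \ref{R:RKHS}(b)), so the adjoint $\Cp^*$ is a well-defined bounded operator and $\Cp^*k_w$ is a genuine element of $\HbE$; moreover $\vp(w)\in\C$, so by Proposition \ref{RKHS} the kernel $k_{\vp(w)}$ also belongs to $\HbE$. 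The strategy is then to show that $\Cp^*k_w$ and $k_{\vp(w)}$ induce the same bounded linear functional on $\HbE$, whence they must coincide.

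To this end I would take an arbitrary $f\in\HbE$ and compute, using in turn the definition of the adjoint, the definition of $\Cp$, and the reproducing property applied at the point $\vp(w)$:
\[\langle f,\Cp^*k_w\rangle=\langle \Cp f,k_w\rangle=\langle f\circ\vp,k_w\rangle=(f\circ\vp)(w)=f(\vp(w))=\langle f,k_{\vp(w)}\rangle.\]
Here the crucial intermediate step $\langle f\circ\vp,k_w\rangle=(f\circ\vp)(w)$ is itself an instance of the reproducing property, legitimate precisely because the hypothesis guarantees $f\circ\vp=\Cp f\in\HbE$. Since the displayed chain of equalities holds for \emph{every} $f\in\HbE$, nondegeneracy of the inner product forces $\Cp^*k_w=k_{\vp(w)}$, and as $w\in\C$ was arbitrary the lemma follows.

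I do not anticipate a genuine mathematical obstacle, as the identity is a formal consequence of the RKHS structure; the only points requiring care are bookkeeping rather than depth. Specifically, I must verify that every object appearing in the computation actually lives in $\HbE$: the element $\Cp^*k_w$ is meaningful only because $\Cp$ is bounded (equivalently invariant), and $k_{\vp(w)}$ exists only because $\HbE$ is a reproducing kernel Hilbert space at every point of $\C$, which is exactly the content of Proposition \ref{RKHS}. Once these are in place, the proof is a three-line application of the reproducing property and the adjoint relation.
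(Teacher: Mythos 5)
Your proof is correct and is precisely the standard argument: the paper itself states this lemma as a ``familiar fact'' with no proof, and your chain $\langle f,\Cp^*k_w\rangle=\langle \Cp f,k_w\rangle=(\Cp f)(w)=f(\vp(w))=\langle f,k_{\vp(w)}\rangle$ for all $f\in\HbE$, plus nondegeneracy of the inner product, is exactly the canonical justification being invoked. Your attention to the bookkeeping --- that $\Cp$ is bounded via Remark \ref{R:RKHS}(b) so that $\Cp^*$ exists, and that $\Cp f\in\HbE$ so the reproducing property applies to it --- is also exactly what makes the formal computation legitimate here.
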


	\subsubsection{Case $\lam_1>0$}\label{sec:>0}
	\hfill
	
	We have the following necessary condition:
	
	\begin{prop}\label{P:lam>0}
		Suppose $\lam_1>0$. Let $\vp$ be an entire function and $\Cp$ be the induced composition operator. If $\Cp$ is bounded on $\HbE$, then
		\[\vp(z)=z+b,\hbox{ with $b\in\C$, $\Re(b)\ge 0$}.\]
	\end{prop}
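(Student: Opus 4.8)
The plan is to show that boundedness of $\Cp$ is rigid enough to force $\vp$ to be a translation, using only the reproducing kernel structure, the uniform control of $\Cp$ on the orthonormal basis $\{q_m\}$, and condition $(E)$ — never Lemma~\ref{L:Pol}. Throughout I would write $\psi=\vp-z$ and use $\|k_w\|^2=\sumn\be_n^{-2}e^{-2\lam_n\Re(w)}$ from Remark~\ref{R:RKHS}(a). First, by Lemma~\ref{adj} we have $\Cp^*k_w=k_{\vp(w)}$, and since $\Cp$ (hence $\Cp^*$) is bounded,
\[\sumn\frac{e^{-2\lam_n\Re(\vp(w))}}{\be_n^2}=\|k_{\vp(w)}\|^2\le\|\Cp\|^2\,\|k_w\|^2=\|\Cp\|^2\sumn\frac{e^{-2\lam_n\Re(w)}}{\be_n^2}\qquad(w\in\C).\]
Moreover $\Cp q_m=\be_m^{-1}e^{-\lam_m\vp}\in\HbE$ for every $m$, with the \emph{uniform} estimate $\|e^{-\lam_m\vp}\|\le\|\Cp\|\,\be_m$. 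In particular each $e^{-\lam_m\vp}$ is an entire Dirichlet series whose frequencies lie in $\Lam$, hence are all $\ge\lam_1>0$.

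Next I would pin down that $\vp$ has slope $1$. Restricting the displayed inequality to $w=\sigma\in\R$ large, keeping only the $n=1$ term on the left and using $\|k_\sigma\|^2\sim\be_1^{-2}e^{-2\lam_1\sigma}$ as $\sigma\to+\infty$ (this is exactly where $\lam_1>0$ is needed), yields $\Re(\vp(\sigma))\ge\sigma-M$ for a constant $M$ and all large $\sigma$. On the other hand, writing $e^{-\lam_m\vp(z)}=\sum_{k=1}^{\infty}c^{(m)}_k e^{-\lam_k z}$ and letting $z=\sigma\to+\infty$ along the real axis, the dominant term is the one with the smallest frequency $\nu_m$ for which $c^{(m)}_{\nu_m}\ne0$ (these coefficients being accessible through Lemma~\ref{L:coeff}); comparing $|e^{-\lam_m\vp(\sigma)}|=e^{-\lam_m\Re(\vp(\sigma))}$ with $\sim|c^{(m)}_{\nu_m}|e^{-\nu_m\sigma}$ gives $\Re(\vp(\sigma))/\sigma\to\nu_m/\lam_m$. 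As the left side is independent of $m$, the ratio $a:=\nu_m/\lam_m$ is a constant, so $\nu_m=a\lam_m\in\Lam$ for all $m$, i.e.\ $a\Lam\subseteq\Lam$. Since $\nu_1\ge\lam_1$ we get $a\ge1$ for free. For $a\le1$ I would iterate: $a^k\lam_m\in\Lam$ for all $k$, and the uniform bound on the leading coefficients along this geometric sequence of frequencies, confronted with $\lam_n^{-1}\log\be_n\to+\infty$ from $(E)$, forces the weights $\be$ to grow impossibly slowly unless $a=1$. Hence $a=1$ and $\Re(\vp(\sigma))=\sigma+O(1)$.

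With $a=1$ the series $Q(z):=e^{-\lam_1\psi(z)}=e^{\lam_1 z}e^{-\lam_1\vp(z)}$ is an entire Dirichlet series with nonnegative frequencies and nonzero constant term $d_1$, and $Q(z)\to d_1$ uniformly as $\Re(z)\to+\infty$; consequently $\psi(z)\to b$ for some constant $b$. It remains to show $Q$ is constant, equivalently $\vp(z)=z+b$. Supposing $Q$ nonconstant with smallest positive frequency $\del>0$, the identity $e^{-\lam_m\vp}=e^{-\lam_m z}Q^{\lam_m/\lam_1}$ shows that the Dirichlet coefficients of $e^{-\lam_m\vp}$ are precisely those of $Q^{\lam_m/\lam_1}$ shifted by $\lam_m$; these are explicit binomial-type expressions that grow with $m$. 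Feeding them into $\|e^{-\lam_m\vp}\|\le\|\Cp\|\,\be_m$ and estimating the relevant weights through $(E)$ should produce a contradiction, forcing $Q$ constant and $\vp(z)=z+b$. Finally, for $\vp(z)=z+b$ one computes $\|\Cp q_m\|=e^{-\lam_m\Re(b)}$, and boundedness gives $\sup_m e^{-\lam_m\Re(b)}<\infty$; since $\lam_m\to+\infty$, this forces $\Re(b)\ge0$.

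The main obstacle is the crux step above (together with the $a\le1$ half): ruling out any nonlinear or higher-frequency part of $\vp$ without Lemma~\ref{L:Pol}. A purely function-theoretic shortcut is \emph{not} available, because an entire function can be bounded and tend to a constant on a right half-plane without being constant (for instance $b+e^{-z}$, which gives $\vp(z)=z+b+e^{-z}$), so no Liouville or Phragm\'en--Lindel\"of argument on half-planes can conclude. The rigidity must instead come entirely from requiring that $e^{-\lam_m\vp}$ be a Dirichlet series with frequencies in $\Lam$ for \emph{every} $m$ simultaneously, with $\ell^2_\be$-norms bounded uniformly in $m$. Converting this whole family of constraints into a contradiction via careful coefficient estimates tied to condition $(E)$ is the technical heart of the argument, and is precisely where the present approach must depart from that of \cite{HHK}.
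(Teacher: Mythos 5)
Your proposal has a genuine gap: the two steps you yourself flag as the ``technical heart'' --- ruling out $a<1$ by iteration (``forces the weights $\be$ to grow impossibly slowly unless $a=1$'') and ruling out a nonconstant $Q$ (``should produce a contradiction, forcing $Q$ constant'') --- are statements of intent, not proofs. Everything that you do establish (the adjoint/kernel inequality, the one-sided estimate $\Re(\vp(\sigma))\ge\sigma-M$ on the real axis, and the final probe-function argument giving $\Re(b)\ge0$, which is exactly the paper's Claim~2) is either preparatory or the easy part. Moreover, the machinery you propose for the crux faces real obstacles: for non-integer exponents $\lam_m/\lam_1$ the expansion of $Q^{\lam_m/\lam_1}$ produces frequencies lying in the additive semigroup generated by the gaps of $Q$, which need not relate to $\Lam$ in any controlled way, so even the bookkeeping needed to ``feed the coefficients into $\|e^{-\lam_m\vp}\|\le\|\Cp\|\be_m$'' is unresolved.

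The deeper problem is that your concluding meta-claim --- that no Liouville-type argument can finish the proof --- is false, and it pinpoints exactly the idea you are missing. Your counterexample $\vp(z)=z+b+e^{-z}$ only obstructs arguments confined to a right half-plane, which is all that your coefficient asymptotics retain; but the kernel inequality you derived,
\[
\sumn \frac{e^{-2\lam_n\Re(\vp(w))}}{\be_n^2}\ \le\ B\sumn \frac{e^{-2\lam_n\Re(w)}}{\be_n^2},
\]
holds at \emph{every} $w\in\C$, and this global validity is what the paper exploits. If $\psi(z)=z-\vp(z)$ is non-constant, then $F=e^{\lam_1\psi}$ is a non-constant entire function (this is where $\lam_1>0$ enters; for $\lam_1=0$ it degenerates to $F\equiv1$), hence unbounded on all of $\C$ by Liouville. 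Pick $w_0$ with $|F(w_0)|^2=e^{2\lam_1\Re(\psi(w_0))}\ge 2B>1$; then $\Re(\psi(w_0))>0$, so monotonicity of $(\lam_n)$ gives $e^{2\lam_n\Re(\psi(w_0))}\ge|F(w_0)|^2$ for every $n$, whence
\[
\|k_{\vp(w_0)}\|^2=\sumn e^{2\lam_n\Re(\psi(w_0))}\frac{e^{-2\lam_n\Re(w_0)}}{\be_n^2}\ \ge\ |F(w_0)|^2\,\|k_{w_0}\|^2\ \ge\ 2B\,\|k_{w_0}\|^2,
\]
contradicting the kernel inequality at the single point $w_0$. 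Note that this argument catches your own example: with $\vp(z)=z+b+e^{-z}$ the function $e^{\lam_1\psi}$ blows up as $\Re(z)\to-\infty$ along $\Im(z)=\pi$, i.e.\ far outside the right half-plane, and that is where the kernel inequality fails. So the entire coefficient apparatus is unnecessary: once the kernel inequality is known globally, ``$\vp(z)=z+b$'' is a three-line Liouville argument, after which your last paragraph finishes the proof exactly as the paper does.
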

	
	\begin{proof}
		Suppose $\Cp$ is bounded on $\HbE$, then its adjoint operator $\Cp^*$ is also bounded. That is, there is a constant $B>0$ such that
		\bgeqn\|\Cp^*f\|^2\le B\|f\|^2,\qquad\forall f\in\HbE.\label{eq:ineq}\edeqn
		Without the loss of generality, we may assume $B>1$.		
		
		In particular, for $f=k_w$ where $w$ is an arbitrary complex number, we note that $\Cp^*k_w=k_{\vp(w)}$, so together with Remark \ref{R:RKHS} (a), the inequality \eqref{eq:ineq} becomes
		\bgeqn
		\label{eq:ineqref}\sumn \be_n^{-2}e^{-2\lam_n\Re(\vp(w))}\le B\sumn \be_n^{-2}e^{-2\lam_n\Re(w)},\qquad\forall w\in\C.
		\edeqn
		
		\noindent$\bullet$ \emph{\underline{Claim 1:} We have $\vp(z)=z+b$ for some $b\in\C$.}
		
		Assume $\psi(z):=z-\vp(z)$ is a non-constant entire function, we show the contradiction by finding some $w\in\C$ such that inequality \eqref{eq:ineqref} does not hold.
		
		Since $\psi$ is not a constant function, the function $F(w)=e^{\lam_1\psi(w)}$ is also a non-constant entire function. By Liouville's theorem, $F$ is not bounded, so we can choose a fixed $w=w_0\in\C$ so that
		\[|F(w_0)|^2=e^{2\lam_1\Re(\psi(w_0))}\ge 2B >1.\]
		
		This implicitly means $\Re(\psi(w_0))> 0$. Noting that $(\lam_n)$ is increasing, from Remark \ref{R:RKHS} (a), we have
		\begin{align}
			\|k_{\vp(w_0)}\|^2=\sumn \be_n^{-2}&e^{-2\lam_n\Re(\vp(w_0))}= \sumn e^{2\lam_n\Re(\psi(w_0))}\be_n^{-2}e^{-2\lam_n\Re(w_0)}\notag\\
			&\ge |F(w_0)|^2 \sumn \be_n^{-2}e^{-2\lam_n\Re(w_0)} \label{eq:k2}\\
			&\ge 2B \sumn \be_n^{-2}e^{-2\lam_n\Re(w_0)}>B\|k_{w_0}\|^2\notag,
		\end{align}
		which clearly contradicts the inequality \eqref{eq:ineqref}. Thus, $\vp(z)=z+b$ for some $b\in\C$.
	
		\smallskip
		\noindent $\bullet$ \emph{\underline{Claim 2}: We have $\Re(b)\ge0$}.
		
		Consider the probe functions $q_k(z)=\be_k^{-1}e^{-\lam_kz}\ (k\ge1)$ introduced in Section \ref{sec:RKHS}. Since $\Cp$ is bounded and $\|q_k\|=1$, the sequence $(\|\Cp q_k\|)_k$ must be bounded. We note that $\Cp q_k(z)=\be_k^{-1}e^{-\lam_k(z+b)}$, so
		\bgeqn\|\Cp q_k(z)\|=e^{-\lam_k\Re(b)}\label{eq:norm}.\edeqn
		
		Since $\lam_k\uparrow+\infty$, it is necessary that $-\Re(b)\le 0$, i.e., $\Re(b)\ge 0$.
		
		The proof is complete.
	\end{proof}

	\begin{rem}
		Proposition \ref{P:lam>0} is similar to the necessity of Theorem \ref{T:S}. To obtain this result in \cite{HHK}, the authors first proved that the function $\vp$ necessarily has the form $az+b$, then derived two other lemmas, before eventually showed that $a=1$. This proof strongly depends on Lemma \ref{L:Pol} and long. Our approach is much simpler, which is applicable to the general spaces $\HbE$ and only utilizes fundamental results of functional analysis.
	\end{rem}

	Now we obtain the following criterion for the bounded composition operators in the case $\lam_1>0$.

	\begin{thm}[Criterion for $\lam_1>0$]\label{T:bounded1}
		Suppose $\lam_1>0$. Let $\vp$ be an entire function and $\Cp$ be the induced composition operator. Then the composition operator $\Cp$ is bounded on $\HbE$ if and only if
		$$\vp(z)=z+b, \hbox{ for some $b\in\C$ with $\Re(b)\ge 0$}.$$
		
		Moreover, the operator norm is given by $\|\Cp\|=e^{-\lam_1\Re(b)}$.
	\end{thm}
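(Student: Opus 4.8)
The plan is to assemble the biconditional directly from the two propositions already proved, and then to carry out a short two-sided estimate for the operator norm. For the equivalence, the sufficiency direction is exactly Proposition \ref{P:suf}(b)---if $\vp(z)=z+b$ with $\Re(b)\ge0$, then $\Cp$ is bounded---while the necessity direction is precisely Proposition \ref{P:lam>0}, which states that boundedness of $\Cp$ forces $\vp(z)=z+b$ with $\Re(b)\ge0$ when $\lam_1>0$. Thus the ``if and only if'' assertion requires no new argument beyond citing these two results.

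For the norm formula I would first establish the upper bound $\|\Cp\|\le e^{-\lam_1\Re(b)}$, which is already embedded in the computation \eqref{eq:norm1}. Explicitly, writing $f(z)=\sumf$ one has $\Cp f(z)=\sumn a_ne^{-\lam_nb}e^{-\lam_nz}$; since $\Re(b)\ge0$ and $(\lam_n)$ is increasing, the largest exponential weight is $e^{-2\lam_1\Re(b)}$, and factoring it out gives $\|\Cp f\|^2\le e^{-2\lam_1\Re(b)}\|f\|^2$ for every $f\in\HbE$. Taking the supremum over the unit ball then yields $\|\Cp\|\le e^{-\lam_1\Re(b)}$.

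For the matching lower bound, the natural test vector is the first probe function $q_1(z)=\be_1^{-1}e^{-\lam_1z}$, a unit vector by orthonormality. Since $\Cp q_1(z)=\be_1^{-1}e^{-\lam_1b}e^{-\lam_1z}$, a direct computation gives $\|\Cp q_1\|^2=\be_1^{-2}e^{-2\lam_1\Re(b)}\be_1^2=e^{-2\lam_1\Re(b)}$, whence $\|\Cp\|\ge\|\Cp q_1\|=e^{-\lam_1\Re(b)}$. Combining the two bounds produces the claimed equality.

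I do not anticipate any serious obstacle here, since the substantive work is concentrated in the two preceding propositions. The only point needing a little care is recognizing that the inequality in \eqref{eq:norm1} is in fact sharp, and that its sharpness is witnessed by the single probe function $q_1$ attached to the smallest frequency $\lam_1$: this is exactly the frequency realizing the factored-out weight, which explains why $\lam_1$---rather than any other $\lam_n$---appears in the final norm expression.
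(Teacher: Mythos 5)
Your proposal is correct and follows essentially the same route as the paper: the equivalence is assembled by citing Proposition \ref{P:suf} for sufficiency and Proposition \ref{P:lam>0} for necessity, the upper bound $\|\Cp\|\le e^{-\lam_1\Re(b)}$ comes from the estimate \eqref{eq:norm1}, and the lower bound is witnessed by the probe function $q_1$, exactly as in \eqref{eq:norm} and \eqref{eq:norm2}. No gaps; your computation $\|\Cp q_1\|=e^{-\lam_1\Re(b)}$ matches the paper's.
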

	
	\begin{proof} The necessary condition is proved in Proposition \ref{P:lam>0}, while the sufficiency is shown in Proposition \ref{P:suf}. Thus, $\Cp$ is bounded if and only if $\vp(z)=z+b$ for some $b\in\C$ with nonnegative real part.
		
		To compute the norm of $\Cp$, note that \eqref{eq:norm} implies
		\bgeqn\label{eq:norm2}\|\Cp\|\ge \|\Cp q_1\|=e^{-\lam_1\Re(b)}.\edeqn
		
		From \eqref{eq:norm1} and \eqref{eq:norm2}, we obtain $\|\Cp\|=e^{-\lam_1\Re(b)}$.
	\end{proof}

	Since spaces $\HbS$ are special cases of spaces $\HbE$, we easily recover Theorem \ref{T:S}.
	
	\subsubsection{Case $\lam_1=0$}\label{sec:=0}
	\hfil
	
	To establish the necessity for the boundedness of composition operators on $\HbE$, we again use the adjoint operator $\Cp^*$, but with the approach that is more complicated than that of Theorem \ref{T:bounded1}. The difference comes from the fact that if $\lam=1$, then $F(w)=1$ in the proof of Theorem \ref{T:bounded1}, and so we do not have \eqref{eq:k2}. One might attempt to introduce ${F(w)=e^{\lam_2\psi(z)}}$, but still the first inequality of \eqref{eq:k2} is not true. Hence, a nontrivial adjustment is necessary.
	
	
	\begin{prop}\label{P:lam=0}
		Suppose $\lam_1=0$. Let $\vp$ be an entire function and $\Cp$ be the induced composition operator. If the operator $\Cp$ is bounded on $\HbE$, then exactly one of the following possibilities happens:
		\begin{enumerate}[$(i)$]
		\item $\vp$ is a constant function, or
		\item $\vp(z)=z+b,$ for some $b\in\C$ with $\Re(b)\ge 0$.
		\end{enumerate}
	\end{prop}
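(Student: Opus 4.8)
The plan is to mirror the adjoint-operator argument of Proposition \ref{P:lam>0}, compensating for the fact that when $\lam_1=0$ the first kernel term is the constant $\be_1^{-2}$, which the factor $e^{2\lam_n\Re(\psi(w))}$ cannot amplify. As before, boundedness of $\Cp$ gives boundedness of $\Cp^*$, a constant $B>1$ with $\|\Cp^*f\|^2\le B\|f\|^2$, and—testing on $f=k_w$ and using $\Cp^*k_w=k_{\vp(w)}$ together with Remark \ref{R:RKHS}(a)—the pointwise inequality
\[\sumn\be_n^{-2}e^{-2\lam_n\Re(\vp(w))}\le B\sumn\be_n^{-2}e^{-2\lam_n\Re(w)},\qquad\forall w\in\C.\]
Writing $\psi(z)=z-\vp(z)$ so that $\Re(\vp(w))=\Re(w)-\Re(\psi(w))$, I would split off the $n=1$ term and bound the remaining terms below by $e^{2\lam_2\Re(\psi(w))}$ (valid for $n\ge2$ whenever $\Re(\psi(w))>0$, since $\lam_n\ge\lam_2$). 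This is the ``nontrivial adjustment'' foreshadowed in the text.

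The first key step is a one-sided growth bound. Abbreviating $A(w)=\sum_{n\ge2}\be_n^{-2}e^{-2\lam_n\Re(w)}$, the split inequality reads $(e^{2\lam_2\Re(\psi(w))}-B)\,A(w)\le(B-1)\be_1^{-2}$. Since $A(w)$ depends only on $\Re(w)$ and tends to $+\infty$ as $\Re(w)\to-\infty$, this forces $\Re(\psi(w))$ to stay below roughly $\tfrac{\log B}{2\lam_2}$ in a left half-plane; that is, there are constants $M,\sigma_*$ with $\Re(\psi(w))\le M$ whenever $\Re(w)\le-\sigma_*$. I stress that the kernel inequality gives only this left-half-plane bound: it cannot by itself detect ``contracting'' affine maps $\vp(z)=az+b$ with $0<a<1$, which satisfy it.

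To control the right half-plane I would invoke the mapping property. Since $\Cp$ is bounded, $\Cp q_2=\be_2^{-1}e^{-\lam_2\vp}\in\HbE$, so $e^{-\lam_2\vp(z)}=\sum_m c_me^{-\lam_mz}$ is an entire Dirichlet series; condition $(E)$ forces its abscissa of absolute convergence to be $-\infty$, whence $|e^{-\lam_2\vp(z)}|\le\Phi(\Re(z))$ with $\Phi(\sigma)=\sum_m|c_m|e^{-\lam_m\sigma}$ finite and decreasing. Taking logarithms yields $\Re(\psi(z))\le\Re(z)+\tfrac1{\lam_2}\log\Phi(\Re(z))$ for all $z$; since $\Phi$ is bounded on $[0,\infty)$ and continuous on compacta, this gives $\Re(\psi(z))\le\Re(z)+C$ for $\Re(z)\ge0$ and a constant bound on the intermediate strip. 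Combining with the left-half-plane bound, I would obtain the global estimate $\Re(\psi(z))\le(\Re z)^{+}+C_0\le|z|+C_0$. By the Borel--Carath\'eodory theorem this forces $|\psi(z)|=O(|z|)$, so $\psi$—and hence $\vp$—is affine, $\vp(z)=az+b$; requiring $\Re(\psi)$ to be bounded above over the left half-plane (killing the imaginary part of the slope) and to grow no faster than $\Re(z)$ on the right pins the slope to be real with $a\in[0,1]$.

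Finally I would use the discreteness of $\Lam$ to eliminate the open interval. For $\vp(z)=az+b$ the function $\Cp q_2$ is a nonzero multiple of $e^{-a\lam_2z}$, which lies in $\HbE$ only if $a\lam_2\in\Lam$, by uniqueness of the Dirichlet representation. As $a\in[0,1]$ we have $a\lam_2\in[0,\lam_2]$, and since $\lam_1=0<\lam_2$ are consecutive, $[0,\lam_2]\cap\Lam=\{0,\lam_2\}$; hence $a\in\{0,1\}$. If $a=0$ then $\vp$ is constant (case $(i)$); if $a=1$ then $\vp(z)=z+b$, and the probe computation $\|\Cp q_k\|=e^{-\lam_k\Re(b)}$ with $\lam_k\uparrow+\infty$ forces $\Re(b)\ge0$ (case $(ii)$). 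The main obstacle is precisely the middle step: the adjoint/kernel estimate yields only a half-plane bound on $\Re(\psi)$, so the argument genuinely needs the mapping property twice—to close off the right half-plane and force affineness via Borel--Carath\'eodory, and then, through the gap between $\lam_1=0$ and $\lam_2$, to rule out the contracting slopes $a\in(0,1)$ that are invisible to the adjoint inequality.
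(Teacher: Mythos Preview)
Your argument is correct, and it reaches the same conclusion by a genuinely different route from the paper's. Both proofs begin with the adjoint/kernel inequality and obtain the same left-half-plane bound on $\Re(\psi)$ (the paper phrases this as ``$Q=e^{\lam_2\psi}$ is bounded on $\Re(z)<0$''). From there the paths diverge. The paper feeds the bound $|e^{-\lam_2\vp(z)}|\le Me^{-\lam_2\Re(z)}$ on the left half-plane directly into the coefficient-extraction formula of Lemma~\ref{L:coeff}: letting $\sigma\to-\infty$ in $|c_n|\le e^{\sigma(\lam_n-\lam_2)}$ kills every Dirichlet coefficient with $n>2$, so $e^{-\lam_2\vp(z)}=a_1+a_2e^{-\lam_2z}$, and an elementary case analysis on $(a_1,a_2)$ finishes. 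You instead combine the left-half-plane bound with the trivial right-half-plane growth of the absolutely convergent Dirichlet series for $\Cp q_2$, obtain $\Re(\psi(z))\le|z|+C_0$ globally, invoke Borel--Carath\'eodory to force $\vp$ affine, and then use the gap $\Lam\cap[0,\lam_2]=\{0,\lam_2\}$ to exclude slopes $a\in(0,1)$.

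Trade-offs: the paper's route is shorter and stays entirely within Dirichlet-series machinery already set up (Lemma~\ref{L:coeff}), never needing to establish that $\vp$ is affine as an intermediate step. Your route is more structural---first isolating the affine form, then the slope---and replaces the coefficient-integral lemma by the standard Borel--Carath\'eodory estimate; it also makes explicit why the contracting maps $\vp(z)=az+b$, $0<a<1$, are excluded (the spectral gap between $\lam_1=0$ and $\lam_2$), a point which in the paper's proof is hidden inside the observation that $a_1+a_2e^{-\lam_2z}$ can vanish if both $a_1,a_2\ne0$.
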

	
	\begin{proof} Suppose $\Cp$ is bounded. Since $\lam_1=0$, we have $\lam_2>0$.
		
		If $\vp(z)=z+b$ for some $b\in\C$, we obtain condition $\Re(b)\ge 0$ in the same way as in Claim 2 of Theorem \ref{T:bounded1}.
		
		If $\vp$ is not of the form $z+b$, we prove that $\vp$ must be constant, then the proof is complete.
		
		\medskip
		
		Since $\Cp$ is bounded, so is the adjoint operator $\Cp^*$. Hence, there is a constant $B>1$ such that $$\|\Cp^* f\|\le B\|f\|\quad\forall f\in\HbE.$$
		
		Since $\vp$ is not of the form $z+b$, the function $\psi(z)=z-\vp(z)$ is not constant. Thus, the function $Q(z)=e^{\lam_2\psi(z)}$ is entire and not constant either. By Liouville's theorem, $Q$ is not bounded, i.e., there exists $(z_k)\subset \C$ such that $|Q(z_k)|\to\infty$ as $k\to\infty$. This allows us to define the following nonempty set of sequences:
		$$\calS:=\left\{(z_k)\subset\C: \lim_{k\to\infty} |Q(z_k)|=+\infty\right\}$$
		
		From this point, our proof is divided into several claims as follows.
		
		$\bullet$ \emph{\underline{Claim 1:} If $(z_k)\in \calS$, then $(\Re (z_k))$ is not bounded above}.
		
		Assume there is a sequence $(z_k)\in \calS$ such that $\Re(z_k)<T$ for some $T>0$. As $|Q(z_k)|\to\infty$, there exists some $w_0\in(z_k)$ such that
		\bgeqn\label{eq:k}|Q(w_0)|^2=e^{2\lam_2\Re(\psi(w_0))}>B+\frac{\be_2^2(B-1)}{\be_1^2e^{-2T\lam_2}}>B.\edeqn
		
		Note that $\Re(\psi(w_0))>0$ is implicitly implied in the inequality above, as $|Q(w)|>B>1$. From \eqref{eq:k} we have
		 \[\frac{e^{-2\lam_2\Re(w_0)}}{\be_2^2}\left(e^{2\lam_2\Re(\psi(w_0))}-B\right)>\frac{e^{-2\lam_2T}}{\be_2^2}\left(e^{2\lam_2\Re(\psi(w_0))}-B\right)>\frac{B-1}{\be_1^2}.\]
		
		Substituting back $\psi(w_0)=w_0-\vp(w_0)$, we obtain
		\bgeqn\frac{e^{-2\lam_2\Re(\vp(w_0))}}{\be_2^2}>\frac{B-1}{\be_1^2}+\frac{Be^{-2\lam_2\Re(w_0)}}{\be_2^2}.\label{eq:ineq2}\edeqn
		
		Since $\lam_n\Re(\psi(w_0))>\lam_2\Re(\psi(w_0))>B$ for all $n>2$, inequality \eqref{eq:ineq2} implies
		\begin{align*}
		\|k_{\vp(w_0)}\|^2&=\frac{1}{\be_1^2}+\frac{e^{-2\lam_2\Re(\vp(w_0))}}{\be_2^2}+\sum_{n=3}^\infty \frac{e^{-2\lam_n\Re(\vp(w_0))}}{\be_n^2}\\
		&>\frac{1}{\be_1^2}+\frac{B-1}{\be_1^2}+\frac{Be^{-2\lam_2\Re(w_0)}}{\be_2^2}+\sum_{n=3}^\infty e^{2\lam_n\Re(\psi(w_0))}\frac{e^{-2\lam_n\Re(w_0)}}{\be_n^2}\\
		&>\frac{B}{\be_1^2}+B\sum_{n=2}^\infty \frac{e^{-2\lam_n\Re(w_0)}}{\be_n^2}=B\|k_{w_0}\|^2.
		\end{align*}
		
		Again, inequality \eqref{eq:ineqref} does not hold, and we obtain a contradiction. Thus, every sequence $(z_k)\in \calS$ has no upper bound.
		\medskip
		
		$\bullet$ \emph{\underline {Claim 2:} The function $Q$ is bounded on the half-plane $\Re(z)<0$}.
		
		Assume $Q$ is unbounded on the half-plane $\Re(z)<0$, then there exists a sequence $(z_k)\subset\C$ such that $\Re(z_k)<0$ and $|Q(z_k)|\to\infty$ as $k\to\infty$. Hence, $(z_k)\in \calS$ and $(\Re (z_k))$ is bounded above. This clearly contradicts Claim 1.
		
		\medskip
		
		$\bullet$  \emph{\underline {Claim 3:} We have the representation $e^{-\lam_2\vp(z)}=a_1+a_2e^{-\lam_2z}$ for some $a_1,a_2\in\C$.}
		
		From Claim 2, there exists some $M>0$ such that $|Q(z)|<M$, if ${\Re(z)<0}$. Substituting $\vp(z)=z-\psi(z)$, we have
		\bgeqn|M^{-1}e^{-\lam_2\vp(z)}|<e^{-\lam_2\Re(z)}, \qquad \textnormal{for all $z$ with } \Re(z)<0\label{eqn:bounded}.\edeqn
		
		Consider the function $f(z)=e^{-\lam_2z}\in\HbE$. Since $\Cp$ maps $\HbE$ to itself, we have
		\[\Cp f(z) = e^{-\lam_2\vp(z)}=\sum^\infty_{n=1}a_ne^{-\lam_nz},\]
		for some $(a_n)\subset\C$.
		
		Divide each expression of the equality above by $M$, we obtain
		\bgeqn\label{eqn:equality} g(z)=M^{-1}e^{-\lam_2\vp(z)}=c_1+c_2e^{-\lam_2z}+c_3e^{-\lam_3}+\dots,\edeqn
		where $c_n=a_n/M$.
		
		From \eqref{eqn:bounded} and \eqref{eqn:equality}, it follows that $|g(z)|<e^{-\lam_2\Re(z)}$ for all $z$ with ${\Re(z)<0}$. For any $n>2$, we write $z=\sigma+ti\ (\sigma,t\in\R)$ and apply Lemma~\ref{L:coeff} to get
		\[|c_n|=\left|\lim_{t\to\infty} \frac{1}{2ti}\int_{\sigma-ti}^{\sigma+ti}g(z)e^{\lam_nz}dz\right|\le e^{\sigma(\lam_n-\lam_2)}.\]
		
		As the inequality above is true for any $\sigma\in\R$, we have
		$$|c_n|=\lim_{\sigma\to-\infty} e^{(\lam_n-\lam_2)\sigma}=0,\quad \hbox{for all $n>2$}.$$
		
		Thus $a_n=0$ for $n>2$. From the uniqueness of the representation of $e^{-\lam_2\vp(z)}$, we have
		\bgeqn e^{-\lam_2\vp(z)}=a_1+a_2e^{-\lam_2z},\label{eqn:equation}\qquad\forall z\in\C.\edeqn
		
		$\bullet$ \emph{\underline {Claim 4:} The function $\vp$ is constant.}
		
		With the same notation as in Claim 3, we have the following cases:
		\begin{enumerate}[(i)]
		\item If $a_2\ne0$ and $a_1\ne0$: the right hand side of \eqref{eqn:equation} is zero at
		$$z=-\lam_{2}^{-1}\left(\ln\left|\frac{a_1}{a_2}\right|+i\textnormal{Arg}\frac{a_1}{a_2}+i(2k+1)\pi\right)\ (k\in\mathbb Z),$$ while the left hand side function is never zero, so we obtain a contradiction. This shows $a_1$ and $a_2$ cannot be both nonzero.
		
		\item If $a_2\ne0$ and $a_1=0$: equation \eqref{eqn:equation} implies $$\vp(z)=z-\lam_2^{-1}(\ln|a_2|+i(\textnormal{Arg}\ a_2+2k\pi)),$$ for some $k\in\mathbb Z$, which contradicts the assumption $\psi$ is not constant. This shows $a_2=0$.
		
		\item If $a_2=0$, then \eqref{eqn:equation} implies $a_1\ne0$. Clearly, $\vp$ is constant.
		\end{enumerate}
		The proof is complete.	
	\end{proof}

	We conclude this section with the following theorem, which provides a criterion for a composition operator to be bounded on $\HbE$ in case $\lam_1=0$.
	
	\begin{thm}[Criterion for $\lam_1=0$]\label{T:bounded2}
		Suppose $\lam_1=0$. Let $\vp$ is an entire function and $\Cp$ be the induced composition operator. Then $\Cp$ is bounded on $\HbE$ if and only if one of the following cases happen
		\begin{enumerate}[(i)]
			\item $\vp$ is constant, or
			\item $\vp(z)=z+b$, for some $b\in\C$ with $\Re(b)\ge 0$.
		\end{enumerate}
		Moreover, $\|\Cp\|\ge 1$ in Case $(i)$, and $\|\Cp\|=1$ in Case $(ii)$.
	\end{thm}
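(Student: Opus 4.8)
The plan is to obtain the characterization by assembling the two propositions already proved and then to settle the two norm statements with a single well-chosen test function. The equivalence ``$\Cp$ bounded $\iff$ $(i)$ or $(ii)$'' requires no fresh argument: necessity of $(i)$ or $(ii)$ is exactly the conclusion of Proposition \ref{P:lam=0}, and sufficiency of each of $(i)$ and $(ii)$ under the hypothesis $\lam_1=0$ is precisely Proposition \ref{P:suf}(a). So the first step is simply to cite these two results and record that they close the ``if and only if'' part.

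Next I would handle the norm in Case $(ii)$. Writing $\vp(z)=z+b$ with $\Re(b)\ge0$, the estimate \eqref{eq:norm1} already gives $\|\Cp f\|^2\le e^{-2\lam_1\Re(b)}\|f\|^2$ for all $f\in\HbE$; since $\lam_1=0$ the prefactor is $1$, so $\|\Cp\|\le 1$. For the reverse bound the decisive observation is that $\lam_1=0$ makes the probe function $q_1(z)=\be_1^{-1}e^{-\lam_1 z}=\be_1^{-1}$ a nonzero \emph{constant}. Hence $\Cp q_1=q_1\circ\vp=q_1$, so that $\|\Cp q_1\|=\|q_1\|=1$ and therefore $\|\Cp\|\ge 1$. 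Combining the two inequalities yields $\|\Cp\|=1$.

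For the norm in Case $(i)$, say $\vp\equiv z_0$, the same constant probe function delivers the lower bound: because $q_1$ is constant, $\Cp q_1=q_1(z_0)=\be_1^{-1}=q_1$, whence $\|\Cp\|\ge\|\Cp q_1\|=1$. (Should one want the exact value, note $\Cp f$ is the constant $f(z_0)$, so $\|\Cp f\|=|f(z_0)|\be_1$, and the RKHS identity $\sup_{\|f\|=1}|f(z_0)|=\|k_{z_0}\|$ together with $\|k_{z_0}\|\ge\be_1^{-1}$ gives $\|\Cp\|=\be_1\|k_{z_0}\|\ge 1$; but only the bound $\|\Cp\|\ge 1$ is asserted.) I do not anticipate a genuine obstacle here, since the theorem is a consolidation of Propositions \ref{P:suf} and \ref{P:lam=0}; the only mild subtlety is recognizing that the hypothesis $\lam_1=0$ is exactly what turns $q_1$ into a $\Cp$-invariant constant, which is the common mechanism producing $\|\Cp\|\ge 1$ in both cases.
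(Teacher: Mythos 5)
Your proposal is correct and follows essentially the same route as the paper: cite Proposition \ref{P:suf} and Proposition \ref{P:lam=0} for the equivalence, use the probe function $q_1$ for the lower bound $\|\Cp\|\ge 1$ in both cases, and the direct coefficient computation (your \eqref{eq:norm1} with $\lam_1=0$) for $\|\Cp\|\le 1$ in Case $(ii)$. If anything, your observation that $\lam_1=0$ makes $q_1$ a constant fixed by \emph{any} $\Cp$ is stated more carefully than the paper's corresponding step, which justifies the Case $(i)$ lower bound only by a loose back-reference to an argument given for maps of the form $z+b$.
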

	
	\begin{proof}	
		The sufficiency is proved in Proposition \ref{P:suf}, and Proposition \ref{P:lam=0} establishes the necessity, so $\vp$ is either constant or of the affine form $z+b$ with $\Re(b)\ge0$. For the norm estimation of $\Cp$, following Claim 3 of Theorem \ref{T:bounded1}, we obtain $\|\Cp\|\ge \|\Cp q_1\|=1$. This is true for both cases $(i)$ and $(ii)$. In addition, in Case $(ii)$, if $f(z)=\sumn a_ne^{-\lam_nz}\in\HbE$ is nonzero, as $0\le\lam_n\uparrow+\infty$ and $\Re(b)\ge 0$, we have
		\[
			\|\Cp f\|^2=\left\|\sumn a_ne^{-\lam_n(z+b)}\right\|^2=\sumn |a_n|^2\be_n^2e^{-2\lam_n\Re(b)} \le \sumn |a_n|^2\be_n^2 = \|f\|^2,
		\]
		so $\|\Cp\|\le 1$. Hence $\|\Cp\|=1$ for Case $(ii)$.
	\end{proof}

	\section{Concluding remarks}\label{sec:conclusion}
	
	In the present paper, we study the relation between an entire function $\vp$ and the boundedness of the induced composition operator $\Cp$ acting on spaces of entire Dirichlet series $\HbE$. We generalize the result of bounded operators $\Cp$ on spaces $\HbS$ and include the untreated case $\lam_1=0$.
	
	The following theorem establishes the \emph{complete characterization} of the boundedness of $\Cp$, which shows that the criteria do not depend on whether the weight sequence $\be=(\be_n)$ satisfies condition $(E)$ or any condition stronger than $(E)$, such as $(S)$.
	
	\begin{thm}[Criterion for any space $\HbE$] \label{T:conclusion} Let $\be$ be a sequence of positive real number with condition $(E)$, and $\vp$ be an entire function. Consider the following statements.
		\begin{enumerate}[(i)]
			\item $\vp$ is constant,
			\item $\vp(z)=z+b$ for some $b\in\C,\ \Re(b)\ge0$.
		\end{enumerate}
		
		The following are true about the boundedness of the composition operator $\Cp$ acting on the induced Hilbert space $\HbE$:
		\begin{enumerate}[(1)]
			\item If $\lam_1=0$, then $\Cp$ is bounded if and only if exactly one of conditions (i) or (ii) holds.
			\item If $\lam_1>0$, then $\Cp$ is bounded if and only if (ii) holds.
		\end{enumerate}
	
		Furthermore, in case $(ii)$, the operator norm is given by $\|\Cp\|=e^{\lam_1\Re(b)}$.
	\end{thm}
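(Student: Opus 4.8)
The plan is to obtain Theorem \ref{T:conclusion} as a synthesis of the facts already in hand, organized according to whether $\lam_1=0$ or $\lam_1>0$; no new machinery is needed beyond what Propositions \ref{P:suf}, \ref{P:lam>0} and \ref{P:lam=0} provide. First I would dispose of sufficiency in one stroke: for both values of $\lam_1$ it is precisely Proposition \ref{P:suf}, which shows that $(i)$ (available only when $\lam_1=0$) and $(ii)$ (available always) each force $\Cp$ to be bounded.

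For necessity, suppose $\Cp$ is bounded. If $\lam_1>0$, Proposition \ref{P:lam>0} forces $\vp(z)=z+b$ with $\Re(b)\ge0$, which is exactly $(ii)$, establishing part (2). If $\lam_1=0$, Proposition \ref{P:lam=0} forces $\vp$ to be either constant or of the form $z+b$ with $\Re(b)\ge0$, which is the content of part (1). The word \emph{exactly} needs only a one-line check of mutual exclusivity: for every $b\in\C$ the map $z\mapsto z+b$ is non-constant, so $(i)$ and $(ii)$ can never hold simultaneously, while Proposition \ref{P:lam=0} guarantees at least one of them. Combining sufficiency with necessity yields the two biconditionals.

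A point the statement explicitly advertises is that the criterion does not depend on the strength of the weight condition, and I would underline why. Each ingredient entering the above—the reproducing-kernel formula \eqref{eq:RKHS}, the adjoint identity $\Cp^*k_w=k_{\vp(w)}$, the coefficient-recovery Lemma \ref{L:coeff}, and Liouville's theorem—is valid under $(E)$ alone; in particular neither Lemma \ref{L:Pol} nor any finiteness-of-order hypothesis such as $(S)$ is invoked anywhere. Hence the characterization is genuinely insensitive to replacing $(E)$ by any stronger condition, which is precisely the assertion that the spaces $\HbS$ are recovered as a special case.

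Finally, the operator norm in case $(ii)$ is read off from the two regimes already treated, by the same two-sided estimate used in Theorems \ref{T:bounded1} and \ref{T:bounded2}: the unit probe function $q_1$ (with $\|q_1\|=1$) supplies the lower bound $\|\Cp\|\ge\|\Cp q_1\|$ through \eqref{eq:norm}, while the termwise weight comparison \eqref{eq:norm1} supplies the matching upper bound uniformly in $f$. These two one-sided estimates coincide, so $\|\Cp\|$ is determined rather than merely bracketed, giving the value recorded in the \emph{furthermore} clause, with the subcase $\lam_1=0$ collapsing to $\|\Cp\|=1$ exactly as in Theorem \ref{T:bounded2}. I do not expect any genuine obstacle: the theorem is essentially a repackaging of the earlier propositions together with the norm computations of Theorems \ref{T:bounded1} and \ref{T:bounded2}, the only new (and trivial) point being the exclusivity in part (1), and the only step requiring care being the verification that the probe lower bound and the weight-comparison upper bound agree so that the norm is pinned exactly.
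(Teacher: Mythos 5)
Your proposal is correct and follows essentially the same route as the paper, whose entire proof is the one-line assembly of Proposition \ref{P:suf} with Theorems \ref{T:bounded1} and \ref{T:bounded2} (your one-line exclusivity check for part (1) is a harmless addition left implicit there). One caution: the two-sided estimate you invoke, via \eqref{eq:norm} and \eqref{eq:norm1}, pins the norm at $e^{-\lam_1\Re(b)}$ exactly as in Theorem \ref{T:bounded1}, so the value $e^{\lam_1\Re(b)}$ printed in the furthermore clause of Theorem \ref{T:conclusion} is a sign typo which your claim that the derived value ``matches the furthermore clause'' silently inherits and should instead correct.
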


	This theorem comes from Propositions \ref{P:suf}, and Theorems \ref{T:bounded1} and \ref{T:bounded2}.
	
	\smallskip
	
	Since the proofs of criteria for the compactness, compact difference, Hilbert--Schmidtness, cyclicity, etc. of composition operators $\Cp$ acting on $\HbS$ in \cite{HHK,WY,HuK12} do not directly use condition $(S)$ but the necessary condition $\vp(z)=z+b$ with $\Re(b)\ge0$, these result may still be true for the general spaces $\HbE$, with the exception that $\vp$ being constants is allowed for the case $\lam_1=0$.
	
	Other findings, such as norm estimation through reproducing kernels in \cite{WY}, which directly uses $(S)$ in their computation, need to be reconsidered when working with condition $(E)$. However, we hope that our discovery and method may inspire readers to investigate further these problems in the future.


\bigskip

\end{document}